\def\bt{\begin{thm}}
	\def\et{\end{thm}}
\def\bl{\begin{lem}}
	\def\el{\end{lem}}
\def\bd{\begin{defn}}
	\def\ed{\end{defn}}
\def\bc{\begin{cor}}
	\def\ec{\end{cor}}
\def\bp{\begin{proof}}
	\def\ep{\end{proof}}
\def\br{\begin{rem}}
	\def\er{\end{rem}}
\newtheorem{thm}{Theorem}[section]
\newtheorem{prop}[thm]{Proposition}
\newtheorem{lem}[thm]{Lemma}
\newtheorem{defn}[thm]{Definition}
\newtheorem{example}[thm]{Example}
\newtheorem{rem}[thm]{Remark}
\newtheorem{cor}[thm]{Corollary}
\numberwithin{equation}{section}
\title{On Dynamics of Asymptotically Minimal Polynomials}
\author{Turgay Bayraktar} 
\author{Mel\.{I}ke Efe} 
\thanks{T.\ Bayraktar is partially supported by Turkish Academy of Sciences, GEBIP grant.}
\thanks{M. Efe is supported by T\"{U}B\.{I}TAK grant ARDEB-3501/119F184}
\address{Faculty of Engineering and Natural Sciences, Sabanc{\i} University, \.{I}stanbul, Turkey}
\email{tbayraktar@sabanciuniv.edu}
\email{melikeefe@sabanciuniv.edu}
\keywords{Julia Set, Extremal Polynomials, Brolin Measure, Klimek topology}
\subjclass[2000]{37F10, 31A15, 33C47}
\date{\today}
\begin{document}
	
	
	\maketitle
	\begin{abstract}
		We study dynamical properties of asymptotically extremal polynomials associated with a non-polar planar compact set $E$. In particular, we prove that if the zeros of such polynomials are uniformly bounded then their Brolin measures converge weakly to the equilibrium measure of $E$. In addition, if $E$ is regular and the zeros of such polynomials are sufficiently close to $E$ then we show that the filled Julia sets converge to polynomial convex hull of $E$ in the Klimek topology.  
\end{abstract}
	
	\section{Introduction}
Let $E\subset \Bbb{C}$ be a compact set with positive logarithmic capacity $\mathrm{cap}(E)$. We denote by $\Omega$ the unbounded component of  $\widehat{\mathbb{C}}\setminus E$ so that $\mathrm{Pc}(E):=\Bbb{C}\setminus \Omega$ is the polynomial convex hull of $E$. We also let $\omega_E$ be the equilibrium measure of $E$ that is the unique maximizer of the logarithmic energy among all Borel probability measures supported on $E$ (see \cite{ransfordbook}).  

Recall that a Borel measure belongs to the class \textbf{Reg} if the $n^{th}$ root of the leading coefficient of the $n^{th}$ orthonormal polynomial is asymptotic to logarithmic capacity of the support of the measure as the degree $n$ grows to infinity (see \S 2.1.1 for precise definition). For example, equilibrium measure of a regular compact set is of class \textbf{Reg}. In this paper, we focus on dynamical properties of asymptotically extremal polynomials associated with planer compact sets and regular measures. More precisely,  following \cite{duncan} (cf. \cite[Chp 3]{general orth. polyn.}) we define:

\begin{defn}\label{definition asyp. min.}     
We say that a sequence of polynomials $p_n(z)=\sum_{j=0}^{n}a_{n,j}z^{j}$ with $a_{n,n}\not=0$ is \textit{asymptotically minimal} on a compact set $ E\subset\mathbb{C} $ if there exists a regular measure $ \tau\in\textbf{Reg}$ with $ supp(\tau)=E $ and a constant $ p\in (0,\infty] $ such that 
    \begin{equation}\label{definition asymp. min. eq. 1}
			\lim_{n\rightarrow\infty}\dfrac{1}{n}\log|a_{n,n}|=-\log \mathrm{cap}(E)
		\end{equation} and 
		\begin{equation}\label{definition asymp. min. eq. 2}
			\lim_{n\rightarrow\infty}\dfrac{1}{n}\log\parallel p_n \parallel_{\mathnormal{L}^{p}(\tau)}=0.
		\end{equation}
	\end{defn}
Note that for the case $p=\infty$ we do not need the reference measure $\tau$ in the definition. We remark that by Proposition \ref{ASMp} if the conditions (\ref{definition asymp. min. eq. 1}) and (\ref{definition asymp. min. eq. 2}) hold for one $p\in (0,\infty)$ then they hold for every $p\in(0,\infty]$. Orthonormal polynomials, normalized Chebyshev, Fekete and Faber polynomials are primary examples of asymptotically minimal  polynomials. In addition, polynomials with uniformly bounded coefficients are also asymptotically minimal on the unit circle with respect to the arc length measure (see \S \ref{examples} for more details). We also remark that Definition \ref{definition asyp. min.} is motivated by the fact that the ratio $\frac1n\log(\|p_n\|_{L^p(\tau)}/|a_{n,n}|)$ converges to the minimal value $\log \mathrm{cap}(E)$ (see \cite[\S 3]{general orth. polyn.}). However, merely this last condition is not sufficient for our purposes (cf. Lemma \ref{greens functions} and Example \ref{Ex}). Thus, we require both the convergence of $n^{th}$ roots of the leading coefficients (\ref{definition asymp. min. eq. 1}) and the corresponding norms (\ref{definition asymp. min. eq. 2}). Asymptotic distribution of normalized counting zero measures of extremal polynomials on a planar compact set are closely related to the equilibrium measure of the set and this relation  has been studied extensively in the literature (see \cite{general orth. polyn., SaTo} references therein). 
	
Recall that for a non-linear polynomial $p(z)$, the filled Julia set $K_p$ is defined to be the set of all points with bounded forward orbit under $p$. The set $K_p$ is a regular polynomially convex compact set in $\Bbb{C}$. The topological boundary $J_p:=\partial K_p$ is called the Julia set of $p$. It follows from \cite{brolin, lyubich} that the equilibrium measure $\omega_{J_p}$ is the unique $p$-invariant measure of maximal entropy (see \cite{CG,Sibony} and references therein for a detailed account of the subject).  

In this note, we study the relation between the potential theoretic equilibrium measure $\omega_E$ (respectively its support $\partial\Omega$) and the dynamically defined Brolin measures (respectively the Julia sets) of asymptotically minimal polynomials $\{p_n\}_{n\in\Bbb{N}}$ on a planar compact set $E$. In this context, Chiristiansen et. al. studied the limit behavior of the filled Julia sets $K_{p_n}$ of orthogonal polynomials with respect to the Hausdorff topology \cite{juliasetortpoly}. Later, in \cite{weaklmtof maximal entropy}  Petersen and Uhre showed that Brolin measures of orthogonal polynomials associated with a \textbf{Reg} class measure converge in weak* topology to the equilibrium measure of the set. More recently,  this result was generalized to the case of normalized Chebyshev polynomials in \cite{filled julia set of cheb. polyn}. In this note, by adapting the techniques of \cite{weaklmtof maximal entropy, LY, filled julia set of cheb. polyn} to our setting we obtain another generalization for asymptotically minimal polynomials:
	\begin{thm}\label{mainteorem1}
		Let $E\subset \Bbb{C}$ be a compact set with positive logarithmic capacity and $\{p_n\}_{n\in \Bbb{N}}$ be a sequence of asymptotically minimal polynomials on $E$. Assume that the zeros of $p_n(z)$ are uniformly bounded for $n\in\Bbb{N}$. Then, the Brolin measures $\omega_{J_{p_n}}\to \omega_E $ in the weak* topology.
	\end{thm}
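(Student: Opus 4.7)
The plan is to reduce the weak-$*$ convergence of the Brolin measures to a Klimek-type convergence of the filled Julia sets: I aim to show that the Green's functions $g_n:=g_{K_{p_n}}$ with pole at $\infty$ satisfy $g_n\to g_E$ locally uniformly on $\Omega$ (with $g_E$ extended by $0$ on $\mathrm{Pc}(E)$). Since $\omega_{J_{p_n}}=\omega_{K_{p_n}}=\tfrac{1}{2\pi}\Delta g_n$ and $\omega_E=\tfrac{1}{2\pi}\Delta g_E$, this convergence, once upgraded to $L^1_{\mathrm{loc}}(\mathbb{C})$, yields $\omega_{J_{p_n}}\to\omega_E$ weakly by taking distributional Laplacians. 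Condition (\ref{definition asymp. min. eq. 1}) gives immediately that $\mathrm{cap}(K_{p_n})=|a_{n,n}|^{-1/(n-1)}\to\mathrm{cap}(E)$, so the task is really about the shapes.

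The assumption that the zeros of $p_n$ are uniformly bounded, combined with (\ref{definition asymp. min. eq. 1}), yields an escape-radius estimate $K_{p_n}\subset\overline{D(0,R)}$ for some $R$ independent of $n$. From this I would derive (i) the uniform growth bound $g_n(w)\le \log^+|w|+C$ on $\mathbb{C}$, and (ii) via $p_n(K_{p_n})=K_{p_n}\subset\overline{D(0,R)}$, the Bernstein--Walsh lower estimate $g_n(z)\ge \tfrac{1}{n}\log|p_n(z)|-\tfrac{1}{n}\log R$. Applying the functional equation $g_n\circ p_n=n g_n$ at any $z\in E$ and using $\|p_n\|_{L^\infty(E)}^{1/n}\to 1$ (the $p=\infty$ form of asymptotic minimality, available through Proposition \ref{ASMp}),
\[
\max_{z\in E}g_n(z)\le \frac{1}{n}\bigl(\log \|p_n\|_{L^\infty(E)}+C\bigr)\longrightarrow 0,
\]
and the maximum principle applied on each bounded component of $\mathbb{C}\setminus E$ (whose boundary lies in $E$) then extends this to $g_n\to 0$ uniformly on $\mathrm{Pc}(E)$.

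On $\Omega$ the upper bound $\limsup_n g_n\le g_E$ would come from the maximum principle for the subharmonic function $u_n=g_n-g_E$ on $\Omega$: on $\partial\Omega\subset E$ one has $\limsup_{z\to\partial\Omega}u_n(z)\le \max_E g_n\to 0$, and at infinity $u_n\to \log\mathrm{cap}(E)-\log\mathrm{cap}(K_{p_n})\to 0$. For the matching lower bound I would invoke Lemma \ref{greens functions}, which provides $\tfrac{1}{n}\log|p_n|\to g_E$ locally uniformly on $\Omega$ for asymptotically minimal polynomials; combined with the Bernstein--Walsh lower bound above this delivers $\liminf_n g_n\ge g_E$ locally uniformly on $\Omega$. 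Together with the convergence on $\mathrm{Pc}(E)$ this gives $g_n\to g_E$ in $L^1_{\mathrm{loc}}(\mathbb{C})$, and the weak-$*$ convergence of the Brolin measures follows by taking distributional Laplacians.

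The main obstacle I expect is the lower bound $\liminf g_n\ge g_E$ on $\Omega$, which is essentially Lemma \ref{greens functions}; this is where both halves of the definition of asymptotic minimality enter indispensably. The example $p_n(z)=z^{n-1}(z+1)$ on $\partial\mathbb{D}$, whose normalized zero-counting measures converge to $\delta_0$ rather than to $\omega_{\partial\mathbb{D}}$, shows that one cannot by-pass this lemma by a direct argument on zero distributions, so the Green's function route really is necessary. A minor secondary subtlety is that $E$ is not assumed regular, so $g_E$ may be discontinuous on a polar subset of $\partial\Omega$; this is handled by reading $g_E$ via its upper-semicontinuous regularization in the maximum-principle step, and the polar defect does not affect the weak-$*$ limit.
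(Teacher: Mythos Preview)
Your overall strategy --- show $g_n\to g_\Omega$ in $L^1_{\mathrm{loc}}(\mathbb{C})$ and take distributional Laplacians --- is sound, and the parts establishing $\max_{\mathrm{Pc}(E)}g_n\to 0$ and the upper bound $\limsup_n g_n\le g_\Omega$ are essentially correct. The gap is in the lower bound on $\Omega$. You invoke Lemma~\ref{greens functions} for the claim that $\tfrac{1}{n}\log|p_n|\to g_\Omega$ locally uniformly on $\Omega$, but that lemma asserts only the \emph{upper} bound $\limsup_n g_n\le g_\Omega$; it says nothing about convergence of $\tfrac{1}{n}\log|p_n|$. The convergence $|p_n|^{1/n}\to e^{g_\Omega}$ is the content of Lemma~\ref{Pn limiti ve potetial}, and it holds only on $\widehat{\mathbb{C}}\setminus F$ for a polynomially convex compact $F$ containing $E$ \emph{and all the zeros of all $p_n$}. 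Since the hypothesis is merely that the zeros are uniformly bounded --- not that they lie in $\mathrm{Pc}(E)$ --- the set $F\cap\Omega$ may well be nonempty, and there your Bernstein--Walsh lower estimate $g_n(z)\ge\tfrac{1}{n}\log|p_n(z)|-\tfrac{1}{n}\log R$ is useless because $\tfrac{1}{n}\log|p_n|$ has $-\infty$ singularities at the zeros. Without a separate argument on $F\cap\Omega$ the $L^1_{\mathrm{loc}}$ convergence is not established.

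The paper's proof avoids this difficulty by never attempting two-sided convergence of $g_n$. It passes to a weak-$*$ limit $\sigma$ of a subsequence of $\omega_n$ and shows $\mathrm{supp}(\sigma)\subset\partial\mathrm{Pc}(E)$ in two steps: $\sigma(\mathbb{C}\setminus\mathrm{Pc}(E))=0$ comes from the $p_n$-invariance of the Brolin measure together with Lemma~\ref{counting measure of pn-w}, which uniformly bounds the fraction of solutions of $p_n(z)=w$ lying in any fixed compact subset of $\Omega$; and $\sigma(\mathrm{Int}\,\mathrm{Pc}(E))=0$ uses only the upper bound of Lemma~\ref{greens functions}. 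The identification $\sigma=\omega_E$ then follows from an energy comparison: $\mathrm{cap}(K_n)\to\mathrm{cap}(E)$ gives $I(\sigma)\ge I(\omega_E)$, and uniqueness of the equilibrium measure finishes. Your route can in fact be repaired --- once any $L^1_{\mathrm{loc}}$-limit $g$ of $(g_n)$ is known to satisfy $0\le g\le g_\Omega$ with $g_\Omega-g\to 0$ at $\infty$, the minimum principle for the superharmonic function $g_\Omega-g$ on $\Omega\cup\{\infty\}$ forces $g=g_\Omega$ on $\Omega$, hence everywhere in $L^1_{\mathrm{loc}}$ --- but as written the lower-bound step does not go through.
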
  
Another aspect of research in this context is approximating a given planar compact set by polynomial filled Julia sets (respectively Julia sets) with respect to the Hausdorff topology (cf. \cite{L, BP, LY, BKS}). In this work, we consider a different problem: Namely, classifying all possible geometric limit sets of a sequence of filled Julia sets of asymptotically minimal polynomials.  First, we observe that the sequence of filled Julia sets (respectively Julia sets) of asymptotically minimal polynomials may not  converge in the Hausdorff topology (see Example \ref{dynamicalexample}). Moreover, under the stronger assumption on zeros of asymptotically minimal polynomials associated with a regular compact set we show that the filled Julia sets of the extremal polynomials converge with respect to a natural metric $\Gamma$, called the Klimek metric \cite{Klimek}, that is defined in terms of Greens functions of the corresponding regular polynomially convex compact sets in $\Bbb{C}$ (see (\ref{Gamma}) for the definition). More precisely, we prove the following result:	
				
\begin{thm}\label{approximation by filled julia sets}
Let  $ E $ be a regular compact set and $ \{p_{n}\}_{n\in \Bbb{N}} $ be a sequence of  asymptotically minimal polynomials on $E$. Assume that for each $ \varepsilon>0 $ there exists $ N\in\mathbb{N} $ such that for all $ n\geq N $ the zeros of $p_n $ are contained in $\varepsilon$-dilation of $ \mathrm{Pc}(E) $. Then $\Gamma(K_{p_n}, \mathrm{Pc}(E))\to 0$ as $n\to\infty$. 
\end{thm}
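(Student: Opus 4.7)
The plan is to prove $g_{K_{p_n}}\to g_{\mathrm{Pc}(E)}$ uniformly on $\Bbb C$; by definition of the Klimek metric this is exactly $\Gamma(K_{p_n},\mathrm{Pc}(E))\to 0$. Two preliminary facts drive the argument. First, since $\mathrm{cap}(K_{p_n})=|a_{n,n}|^{-1/(n-1)}$, the asymptotic-minimality condition (\ref{definition asymp. min. eq. 1}) gives $\mathrm{cap}(K_{p_n})\to\mathrm{cap}(E)=\mathrm{cap}(\mathrm{Pc}(E))$, so the pole-at-infinity normalizations of the two Green functions agree in the limit. Second, combining $|a_{n,n}|^{1/n}\to 1/\mathrm{cap}(E)$ with the assumption that the zeros of $p_n$ eventually lie in a fixed neighborhood of $\mathrm{Pc}(E)$, an elementary escape-to-infinity estimate shows $K_{p_n}\subset\overline{D(0,R)}$ for some fixed $R$ and all sufficiently large $n$.

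The first substantive step is to show that the normalized zero-counting measures $\nu_n:=\frac{1}{n}\sum_{j=1}^{n}\delta_{z_{n,j}}$ converge weakly to $\omega_E$. Any weak-$*$ limit $\nu$ is supported in $\mathrm{Pc}(E)$ by the zero hypothesis. Writing $\frac{1}{n}\log|p_n(z)|=U^{\nu_n}(z)+\frac{1}{n}\log|a_{n,n}|$ and invoking the Bernstein--Walsh-type upper bound supplied by Proposition \ref{ASMp} (namely $\limsup_n\frac{1}{n}\log|p_n(z)|\le g_{\mathrm{Pc}(E)}(z)$), one obtains $U^{\nu}(z)\le U^{\omega_E}(z)$ on $\Omega:=\Bbb C\setminus\mathrm{Pc}(E)$. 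Since $U^{\omega_E}-U^{\nu}$ is nonnegative and harmonic on $\Omega$ with vanishing limit at $\infty$, the minimum principle forces equality on $\Omega$, whence $\nu=\omega_E$ by uniqueness of the equilibrium measure. Consequently $\frac{1}{n}\log|p_n(z)|\to g_{\mathrm{Pc}(E)}(z)$ locally uniformly on $\Omega$.

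To pass from the static potential $\frac{1}{n}\log|p_n|$ to the dynamical Green function, I use the functional equation $g_{K_{p_n}}(p_n(z))=n\cdot g_{K_{p_n}}(z)$ together with the expansion $g_{K_{p_n}}(w)=\log|w|-\log\mathrm{cap}(K_{p_n})+o(1)$ as $w\to\infty$. For $|z|\ge R$, where $|p_n(z)|$ is uniformly large by the first paragraph, this rearranges to
\begin{equation*}
g_{K_{p_n}}(z)=\tfrac{1}{n}\log|p_n(z)|+\tfrac{\log|a_{n,n}|}{n(n-1)}+o(1),
\end{equation*}
with the middle term vanishing in the limit since $\frac{1}{n}\log|a_{n,n}|$ is bounded. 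Combined with the previous paragraph, this delivers uniform convergence $g_{K_{p_n}}\to g_{\mathrm{Pc}(E)}$ on $\{|z|\ge R\}$, and an analogous argument yields locally uniform convergence on all of $\Omega$.

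The principal obstacle is bounding $g_{K_{p_n}}$ uniformly on $\mathrm{Pc}(E)$, where the limiting Green function vanishes. To handle this, I invoke Theorem \ref{mainteorem1}, which under our hypotheses yields weak-$*$ convergence of the Brolin measures $\omega_{K_{p_n}}=\omega_{J_{p_n}}\to\omega_E$. Combined with continuity of $U^{\omega_E}$ (which equals $\log\mathrm{cap}(E)$ on $\mathrm{Pc}(E)$ by regularity of $E$) and the representation $g_{K_{p_n}}=U^{\omega_{K_{p_n}}}-\log\mathrm{cap}(K_{p_n})$, a standard potential-theoretic argument gives $g_{K_{p_n}}(z)\to 0$ pointwise for $z\in\partial\mathrm{Pc}(E)$. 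Since $\{g_{K_{p_n}}\}$ is a uniformly bounded family of nonnegative subharmonic functions, Hartogs' lemma upgrades this to $\sup_{\partial\mathrm{Pc}(E)}g_{K_{p_n}}\to 0$, and the maximum principle propagates this bound to $\sup_{\mathrm{Pc}(E)}g_{K_{p_n}}\to 0$. Piecing together the uniform convergence on $\{|z|\ge R\}$, on compact subsets of $\Omega$, and on $\mathrm{Pc}(E)$ (with continuity of $g_{\mathrm{Pc}(E)}$ handling a small neighborhood of $\partial\mathrm{Pc}(E)$) yields $g_{K_{p_n}}\to g_{\mathrm{Pc}(E)}$ uniformly on $\Bbb C$, completing the proof.
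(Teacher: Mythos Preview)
Your proof contains one false assertion: the claim that any weak-$*$ limit $\nu$ of the zero-counting measures equals $\omega_E$. The counterexample $p_n(z)=z^n$ on $E=\overline{\Bbb D}$ (asymptotically minimal with all zeros at the origin, so $\nu_n=\delta_0\not\to\omega_E$) shows this fails. Your minimum-principle argument only yields $U^\nu=U^{\omega_E}$ on $\Omega$; the step ``whence $\nu=\omega_E$ by uniqueness of the equilibrium measure'' is unjustified, since equality of potentials on the unbounded component determines only the balayage of $\nu$ onto $\partial\Omega$, not $\nu$ itself. Fortunately this overclaim is harmless: what you actually need, and what your argument actually establishes, is $U^\nu=U^{\omega_E}$ on $\Omega$, which already gives $\frac{1}{n}\log|p_n|\to g_\Omega$ locally uniformly on $\Omega$ (this is essentially the paper's Lemma~\ref{Pn limiti ve potetial}).

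With that correction your argument goes through, but along a route different from the paper's. To control $\sup_{\mathrm{Pc}(E)}g_{K_{p_n}}$ you invoke Theorem~\ref{mainteorem1}, then the Principle of Descent and Hartogs' lemma. The paper instead appeals directly to Lemma~\ref{greens functions} (a Bernstein--Walsh estimate yielding $\limsup_n g_n\le g_\Omega$ locally uniformly), which is itself one of the ingredients in the proof of Theorem~\ref{mainteorem1}; so your detour through that theorem is logically valid but circuitous. For the other half, the paper shows $K_n\subset E_s$ for every $s>0$ and large $n$ via Lemma~\ref{Pn limiti ve potetial}, the minimum-modulus principle on $\Omega_s$, and the uniform estimate~(\ref{greens functions, equation 1}), which immediately gives $\|g_\Omega\|_{K_n}\le s$. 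Your version (locally uniform convergence on $\Omega$ plus a collar argument near $\partial\mathrm{Pc}(E)$) reaches the same destination, but the ``piecing together'' in your final sentence deserves more care: the upper bound $g_{K_{p_n}}\le g_\Omega+\varepsilon$ on the collar should come from applying Hartogs on all of $\overline{D(0,R)}$ (not only on $\mathrm{Pc}(E)$), while the lower bound comes from $g_{K_{p_n}}\ge 0$ together with $g_\Omega<\varepsilon$ there by regularity of $E$.
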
	

Finally, as a corollary of Theorem \ref{approximation by filled julia sets}, we also prove that for any Hausdorff-limit set $K_{\infty}$ of filled Julia sets $\{K_n\}_{n\in\Bbb{N}}$ we have $\mathrm{Pc}(K_{\infty})=\mathrm{Pc}(E)$ (Proposition \ref{limitsets}). 
	 			
		\section{Preliminaries}
		\subsection{Potential Theory}
		
		Throughout this section, we shall use \cite{ransfordbook} as a reference to the basic concepts of potential theory in the complex plane. Let $ E $ be a compact subset of $ \mathbb{C} $ and $ \mu $ be a finite Borel measure with support $ E $. The potential of $ \mu $ is a function $ U_{\mu}:\mathbb{C}\rightarrow[-\infty,\infty) $ defined by \begin{equation}
			U_{\mu}(z)=\int \log|z-w|d\mu(w) \ \ (z\in \mathbb{C}).
		\end{equation}
		It is well known theorem that $ U_{\mu} $ is subharmonic on $ \mathbb{C} $, and harmonic on $ \mathbb{C}\setminus(supp \ \mu) $. Also \begin{equation}
			U_{\mu}(z)=\mu(\mathbb{C})\log|z|+O(|z|^{-1}) \ \ \  as \ \ \ z\rightarrow\infty.
		\end{equation}
		The logarithmic energy of $ \mu $ is defined by \begin{equation}
			I(\mu):=\int\int\log|z-w|d\mu(w)d\mu(z)=\int U_{\mu}(z)d\mu(z).
		\end{equation}
		It is possible that $ I(\mu)=-\infty $. A subset $ E $ of $ \mathbb{C} $ is called polar if $ I(\mu)=-\infty $ for every finite Borel measure $ \mu\neq 0 $ for which $ supp (\mu) $ is a compact subset of $ E $. Denote by $ \mathcal{P}(E) $ the collection of all Borel probability measures on $ E $. If there exists $ \omega_{E}\in \mathcal{P}(E) $ such that\begin{equation}
			I(\omega_E)=\sup_{\mu\in\mathcal{P}(E)} I(\mu),
		\end{equation}
		then $ \omega_E $ is called an equilibrium measure for $ E $. By \cite[Theorem 3.3.2]{ransfordbook}, every compact set $ E $ has an equilibrium measure. Moreover, if $ E $ is non-polar compact set, then this equilibrium measure is unique and $ supp(\omega_E) $ is subset of the exterior boundary of $ E $ (see eg. \cite[Theorem 3.7.6]{ransfordbook} ). 
		
		The Green's function for proper subdomain $ \Omega $ of $ \widehat{\mathbb{C}} $ is the non-negative subharmonic function which satisfy \begin{equation}
			g_{\Omega}(z)= U_{\omega_E}(z)-I(\omega_E)=U_{\omega_E}(z)-\log \mathrm{cap}(E)
		\end{equation}
		where $ \mathrm{cap}(E)=e^{I(\omega_E)} $ is the logarithmic capacity of $ E $.
		If $ \partial \Omega $ is non-polar, then there exists a unique Green's function $ g_{\Omega} $ for $ \Omega $ (\cite{ransfordbook}, Theorem 4.4.2).
		
		\subsubsection{Regular Measures}\label{extremalp} Given a Borel measure $\mu$ with compact support $$S_{\mu}:=supp(\mu)\subset \Bbb{C}$$ one can define the inner product 
$$\langle f,g \rangle:=\int_{\Bbb{C}}f(z)\overline{g(z)}d\mu $$ on the space of polynomials $\mathcal{P}_n.$ Then one can find uniquely defined orthonormal polynomials
 $$P_n^{\mu}(z)=\gamma_n(\mu)z^n+\cdots,\ \text{where}\ \gamma_n(\mu)>0\ \text{and}\ n\in\Bbb{N}.$$ Following \cite{general orth. polyn.}, we say that $\mu$ is \textit{regular}, denoted by $\mu\in \textbf{Reg}$, if
\begin{equation}
\lim_{n\to \infty}\gamma_n(\mu)^{1/n}=\frac{1}{\mathrm{cap}(S_{\mu})}.
\end{equation}
   
   The following result is well known (see \cite{duncan} and references therein). We provide a proof for the convenience of the reader.
\begin{prop}\label{ASMp}
Let $E\subset \Bbb{C}$ be a non-polar compact set and $\tau\in\textbf{Reg}$ be a finite measure supported on $E$. A sequence $p_{n}(z)=a_{n,n}z^n+\dots$ is asymptotically minimal on $E$ for $p=\infty$ if and only if it is asymptotically minimal with respect to $\tau$ and for some (equivalently for all) $p\in (0,\infty)$.	
\end{prop}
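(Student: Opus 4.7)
Since the leading-coefficient condition \eqref{definition asymp. min. eq. 1} does not involve $p$, the equivalence reduces to showing that \eqref{definition asymp. min. eq. 2} with $p=\infty$ is interchangeable with its $L^p(\tau)$ analogue for every (equivalently, some) $p\in(0,\infty)$. My plan is to squeeze both $\frac{1}{n}\log\|p_n\|_{L^p(\tau)}$ and $\frac{1}{n}\log\|p_n\|_E$ between $0$ from above and below, using $\tau\in\textbf{Reg}$ to pass between sup-norm and $L^p$-norm, and using \eqref{definition asymp. min. eq. 1} together with extremal-polynomial asymptotics for the lower bounds.

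The two ingredients I would pull from \cite[Ch.~3]{general orth. polyn.} are: (i) the asymptotic Bernstein--Markov property of a regular measure, that is,
\[
\limsup_{n\to\infty}\Bigl(\frac{\|Q_n\|_E}{\|Q_n\|_{L^p(\tau)}}\Bigr)^{1/n}\le 1
\]
for every sequence of polynomials with $\deg Q_n\le n$ and every $p\in(0,\infty)$; and (ii) the root asymptotics $m_{n,p}(\tau)^{1/n}\to \mathrm{cap}(E)$, where $m_{n,p}(\tau):=\inf\{\|Q\|_{L^p(\tau)}:Q\text{ monic},\ \deg Q=n\}$, together with its $p=\infty$ analogue, namely $t_n(E)^{1/n}\to \mathrm{cap}(E)$ for the Chebyshev constants.

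Granting (i) and (ii), the direction $p=\infty\Rightarrow p\in(0,\infty)$ follows by combining the trivial upper bound $\|p_n\|_{L^p(\tau)}\le \tau(E)^{1/p}\|p_n\|_E$ with the lower bound obtained by factoring $p_n=a_{n,n}\tilde p_n$ with $\tilde p_n$ monic, noting $\|\tilde p_n\|_{L^p(\tau)}\ge m_{n,p}(\tau)$, and invoking (ii) together with \eqref{definition asymp. min. eq. 1}:
\[
\frac{1}{n}\log\|p_n\|_{L^p(\tau)}\;\ge\;\frac{1}{n}\log|a_{n,n}|+\frac{1}{n}\log m_{n,p}(\tau)\;\longrightarrow\; -\log\mathrm{cap}(E)+\log\mathrm{cap}(E)=0.
\]
The reverse direction is symmetric: (i) yields $\limsup\frac{1}{n}\log\|p_n\|_E\le 0$, while the Chebyshev extremal inequality $\|p_n\|_E\ge |a_{n,n}|\,t_n(E)$ combined with $t_n(E)^{1/n}\to\mathrm{cap}(E)$ supplies $\liminf\ge 0$.

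The main obstacle I anticipate is verifying (i) and (ii) uniformly across all $p\in(0,\infty)$, including the quasi-Banach regime $p<1$. Both facts are classical for $\textbf{Reg}$-class measures---(ii) is essentially the defining property, and (i) is among the standard equivalent characterizations of regularity---but one must track the extension from the familiar $p=2$ case to general $p$ carefully; this bookkeeping is where I expect the full proof to require the most attention.
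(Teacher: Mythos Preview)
Your proposal is correct and follows essentially the same route as the paper. The paper phrases your ingredient (i) as a Nikolski\u\i-type inequality $\|p_n\|_E\le M_n\|p_n\|_{L^p(\tau)}$ with $\limsup M_n^{1/n}=1$, and for the lower bounds it combines this with the Chebyshev inequality $\mathrm{cap}(E)^n\le \|p_n/a_{n,n}\|_E$ rather than invoking the $L^p$-minimal constants $m_{n,p}(\tau)$ directly---but since $m_{n,p}(\tau)^{1/n}\to\mathrm{cap}(E)$ is itself a consequence of exactly those two facts, the difference is only in packaging.
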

\begin{proof}
It is known that every measure $\tau\in \textbf{Reg}$ on $E$ satisfies Nikolski\u\i\ type inequality (cf. \cite[\S 3]{general orth. polyn.}) for some (equivalently for all) $p\in (0,\infty)$. Namely, there exists constants $M_n>0$ such that
$$\limsup_{n\rightarrow\infty}M_{n}^{1/n}=1$$ and for all polynomials $p_{n}$ whose degree is at most $n$ we have \begin{equation}\label{Neq}
		\|p_{n}\|_{E}\leq M_{n}\|p_{n}\|_{L^{p}(\tau)}. 
		\end{equation}
Recall that by minimality we always have 
$$\mathrm{cap}(E)^n\leq \|\frac{p_n}{a_{n,n}}\|_{E}.$$ 
Assume $\{p_{n}\}_{n\in\Bbb{N}}$ is asymptotically minimal on $E$ for some $p\in (0,\infty)$. Then, by (\ref{Neq}) we have \begin{equation}
		\mathrm{cap}(E)\leq\limsup_{n\rightarrow\infty}\dfrac{\|p_{n}\|_{E}^{1/n}}{|a_{n,n}|^{1/n}}\leq\lim_{n\rightarrow\infty}\dfrac{(M_{n}\|p_{n}\|_{L^{p}(\tau)})^{1/n}}{|a_{n,n}|^{1/n}}=\mathrm{cap}(E).
	\end{equation}
	This gives us \begin{equation}
		\lim_{n\rightarrow\infty}\dfrac{1}{n}\log\|p_n\|_{E}=0.	
		\end{equation}
		and hence $\{p_{n}\}_{n\in\Bbb{N}}$ be an asymptotically minimal on $E$ for $p=\infty$. 
	
	In order to prove reverse direction observe that
		$$\|p_n\|_{L^{p}(\tau)}\leq \|p_n\|_{E} \sqrt{\tau(E)}.$$
If  $\{p_{n}\}_{n\in\Bbb{N}}$ is asymptotically minimal with $p=\infty$ then 
		\begin{equation}
		\limsup_{n\rightarrow\infty}\dfrac{\|p_{n}\|_{L^{p}(\tau)}^{1/n}}{|a_{n,n}|^{1/n}}\leq \mathrm{cap}(E).
		\end{equation}
	Moreover, by (\ref{Neq}) we always have 	
	\begin{equation}
		\mathrm{cap}(E)\leq \liminf_{n\rightarrow\infty}\dfrac{\|p_{n}\|_{L^{p}(\tau)}^{1/n}}{|a_{n,n}|^{1/n}}.
		\end{equation}
Hence, $\{p_{n}\}_{n}$ is asymptotically minimal on $E$ with respect to $\tau$ and $p\in (0,\infty)$.  	
	\end{proof}

      Finally, we say that a sequence $\mu_n$ converges to a measure $\mu$ in the \textit{weak* topology} if for any continuous function $f$ with compact support in $\Bbb{C}$ we have $\int f d\mu_n\to \int f d\mu$. 

		\subsection{Polynomial Dynamics}
		
		Let $ p(z)=\sum_{j=0}^{n}a_{j}z^{j} $ be a polynomial of degree $ n\geq 2 $. The attracting basin of $ \infty $ for $ p$ is the set of points with unbounded forward orbit under $p$ that is\begin{equation}
			\Omega_p:=\{z\in\widehat{\mathbb{C}}: p^{k}(z)\rightarrow\infty \ \  as \ \  k\rightarrow\infty\} 	
		\end{equation}
		where $ p^{k}=\overbrace{p\circ\cdots \circ p}^{k \ many} $. Let us denote the escape radius for $p$ by $$R_p:=\frac{1+|a_n|+\dots+|a_0|}{|a_n|}.$$ It follows that 
		\begin{equation}
			\Omega_p=\bigcup_{k\geq0}p^{-k}(\mathbb{C}\setminus \overline{D(0,R_p)}).
		\end{equation}
		The set $ \Omega_{p} $ is a connected, open and completely invariant, i.e. $ p^{-1}(\Omega_p)=\Omega_{p} $. The complement $K_p:=\Bbb{C}\setminus \Omega_{p} $ is called the filled Julia set. The Julia set of $ p $ is defined by $ J_{p}:=\partial \Omega_{p}=\partial K_{p} $. It is well-known that $ K_{p} $ and $ J_{p} $ are compact and completely invariant. One can observe that \begin{equation}
			K_{p}=\bigcap_{k\geq0}p^{-k}(\overline{D(0,R_p)}).
		\end{equation}
		We denote the Dynamical Green's function of $p$ by
		$ g_{p}(z):\mathbb{C}\rightarrow[0,\infty) $  where
		\begin{equation}
			g_{p}=\lim_{k\rightarrow\infty}\dfrac{1}{n^{k}}\log^{+}|p^{k}(z)|
		\end{equation}
		where $\log^{+}=\max\{\log, 0\}$. By a theorem of Brolin \cite{brolin} the function $g_p$ coincides with the Green's function of $\Omega_p$ with the pole at infinity. Note that $ g_{p} $ vanishes precisely on $ K_{p} $ and has the invariant property
		\begin{equation}
			g_{p}(p(z))=n\cdot g_{p}(z). 
		\end{equation}	Moreover, the logarithmic capacity of the filled Julia set is given by
		\begin{equation}\label{Juliacap}
			\mathrm{cap}(K_{p})=\dfrac{1}{|a_{n}|^{\frac{1}{n-1}}}.
		\end{equation}
		It follows from \cite{brolin} that the measure $ \omega_p=\dfrac{1}{2\pi}\Delta g_{p} $ is $p$-invariant and coincides with the equilibrium measure of $ J_{p} $. The measure $\omega:= \omega_p$ is balanced i.e. for every set $ X\subset\widehat{\mathbb{C}} $ on which $ p $ is injective we have $ \omega(p(X))=n\cdot\omega(X) $. Moreover, Lyubich \cite{lyubich} proved that $ \omega$ is the unique measure of maximal entropy for $ p. $ 
		
		\section{Proof of Theorem \ref{mainteorem1}}\label{mainproof}
		Through this section, $ E, \Omega, \mathrm{Pc}(E), \omega_E$ and $p_n $ are as in the introduction. We also let $ g_{\Omega} $ be the Green's function for $ \Omega $ with pole at $ \infty $. We denote $K_{n}:=K_{p_n}$ the filled Julia set, $\Omega_{n}:=\Bbb{C}\setminus K_n$ and $J_{n}:=J_{p_n}$ denote the Julia set of $p_n$. We also denote the Brolin measures by $ \omega_{n}:=\omega_{J_{p_n}}$. Finally, we denote the counting measures of zeros of $p_n $ by 
		 \begin{equation}
			\mu_{n}:=\dfrac{1}{n}\sum_{p_{n}(z)=0}\delta_{z}
		\end{equation}
		where $ \delta_{z} $ denotes the unit mass at $z$.
		We need the following lemma on the behavior of $ |p_{n}|^{\frac{1}{n}} $. The proof is analogous to  that of \cite[Lemma 3.1 (i)]{LY}.
		\begin{lem}\label{Pn limiti ve potetial}
			Let $\{p_{n}\}_{n\in\Bbb{N}} $ be as in Theorem \ref{mainteorem1} with bounded zeros and $F\subset \Bbb{C}$ be a polynomially convex compact set such that  $$\displaystyle \cup_{n=1}^{\infty}\{z\in \Bbb{C}:p_n(z)=0\}\cup E\subset F$$ Assume that the sequence of counting measures of zeros $ \{\mu_{n}\}_{n\geq1} $ is weak*-convergent to a measure $ \nu $. Then $ |p_{n}|^{\frac{1}{n}}\rightarrow \exp(g_{\Omega})$ locally uniformly on $ \widehat{\mathbb{C}}\setminus F$.
		\end{lem}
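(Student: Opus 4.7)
The plan is to write $\tfrac{1}{n}\log|p_n|$ explicitly in terms of the leading coefficient and the potential of the counting measure of zeros, identify the locally uniform limit on $\widehat{\mathbb{C}}\setminus F$ using the two asymptotic minimality conditions together with the weak* convergence $\mu_n\to\nu$, and then pin down that limit as $g_\Omega$ by combining the Bernstein--Walsh inequality with a minimum-principle argument at infinity.

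First I would factor $p_n(z)=a_{n,n}\prod_{k=1}^{n}(z-z_{n,k})$, so that
$$u_n(z):=\tfrac{1}{n}\log|p_n(z)|=\tfrac{1}{n}\log|a_{n,n}|+U_{\mu_n}(z).$$
By condition (\ref{definition asymp. min. eq. 1}) of asymptotic minimality, $\tfrac{1}{n}\log|a_{n,n}|\to-\log\mathrm{cap}(E)$. Because $\mathrm{supp}(\mu_n)\subset F$ for every $n$ and the kernel $\log|z-w|$ is uniformly continuous on $K\times F$ for any compact $K\subset\widehat{\mathbb{C}}\setminus F$ (handling a neighborhood of $\infty$ via the standard decomposition $\log|z-w|=\log|z|+\log|1-w/z|$), the weak* convergence $\mu_n\to\nu$ upgrades to $U_{\mu_n}\to U_\nu$ locally uniformly on $\widehat{\mathbb{C}}\setminus F$. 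Consequently $u_n\to h:=U_\nu-\log\mathrm{cap}(E)$ locally uniformly on $\widehat{\mathbb{C}}\setminus F$.

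Next I would invoke the Bernstein--Walsh inequality $|p_n(z)|\le\|p_n\|_E\exp(n\,g_\Omega(z))$, which is valid for any polynomial of degree $n$ since $E$ is non-polar. By Proposition \ref{ASMp}, the hypothesis also yields $\tfrac{1}{n}\log\|p_n\|_E\to 0$, so passing to the limit in
$$u_n(z)\le\tfrac{1}{n}\log\|p_n\|_E+g_\Omega(z)$$
produces the one-sided bound $h\le g_\Omega$ on $\widehat{\mathbb{C}}\setminus F$.

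Finally, I would promote this to equality by a maximum-principle argument at infinity. Since $E\subset F$ and $F$ is polynomially convex, $\widehat{\mathbb{C}}\setminus F$ is a connected open subset of $\Omega$ on which both $g_\Omega$ and $h$ are harmonic (the latter because $\mathrm{supp}(\nu)\subset F$). Because $\nu$ and $\omega_E$ are both probability measures, $U_\nu(z)=\log|z|+O(|z|^{-1})$ and $U_{\omega_E}(z)=\log|z|+O(|z|^{-1})$ as $z\to\infty$, and hence $w:=g_\Omega-h=U_{\omega_E}-U_\nu$ extends harmonically across $\infty$ with $w(\infty)=0$. Thus $w$ is a non-negative harmonic function on the connected open set $\widehat{\mathbb{C}}\setminus F$ whose minimum is attained at the interior point $\infty$, so the minimum principle forces $w\equiv 0$, giving $h=g_\Omega$ and the claim. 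The main obstacle is this last identification: the Bernstein--Walsh bound alone only gives $h\le g_\Omega$, and one must exploit both the leading-coefficient normalization in (\ref{definition asymp. min. eq. 1}) and the total mass of $\nu$ to match the expansions at infinity; polynomial convexity of $F$ is crucial to ensure that $\widehat{\mathbb{C}}\setminus F$ is connected, allowing the minimum principle to be applied in its clean form.
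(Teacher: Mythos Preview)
Your proof is correct. The overall structure---decomposing $\tfrac{1}{n}\log|p_n|$ into the leading-coefficient term plus $U_{\mu_n}$ and showing locally uniform convergence to $h=U_\nu-\log\mathrm{cap}(E)$ on $\widehat{\mathbb{C}}\setminus F$---parallels the paper's argument, which does the same thing (using pointwise convergence plus Montel's theorem rather than your direct kernel estimate, but these are essentially interchangeable). The genuine difference is in the identification $h=g_\Omega$. The paper first invokes \cite[Lemma~3.3]{duncan} to place $\mathrm{supp}(\nu)$ inside $\mathrm{Pc}(E)$ and then appeals to \cite[Theorem~4.7]{SaTo} to conclude directly that $U_\nu-\log\mathrm{cap}(E)=g_\Omega$ on $\Omega$. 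You instead obtain the one-sided bound $h\le g_\Omega$ from Bernstein--Walsh and Proposition~\ref{ASMp}, and then close the gap with an elementary minimum-principle argument: $w=g_\Omega-h=U_{\omega_E}-U_\nu$ is nonnegative, harmonic on the connected set $\widehat{\mathbb{C}}\setminus F$, and vanishes at $\infty$, hence $w\equiv 0$. Your route is more self-contained (no external black boxes beyond Bernstein--Walsh) and makes explicit why polynomial convexity of $F$ matters; the paper's route is shorter once one accepts the cited results and additionally yields $\mathrm{supp}(\nu)\subset\mathrm{Pc}(E)$, which the paper uses elsewhere.
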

		\begin{proof}
			Let $ \widetilde{p_{n}}:=\dfrac{p_{n}}{|a_{n,n}|} $. First, we will show that $ |\widetilde{p_{n}}|^{\frac{1}{n}}\rightarrow e^{U_{\nu}}$ 
			locally uniformly on $ \widehat{\mathbb{C}}\setminus F $.  Since the function $ w\rightarrow\log|z-w| $ is continuous on $ F $ for each $ z\in \mathbb{C}\setminus F $ we have 
			\begin{equation}
				\int_{F}\log|z-w|d\mu_{n}(w)\longrightarrow\int_{F}\log|z-w|d\nu(w) \ \ \ as \ \ n\rightarrow\infty.
			\end{equation} 
			By \cite[Lemma 3.3]{duncan} we have $supp(\nu)\subseteq \mathrm{Pc}(E)$. Thus, we obtain
			\begin{equation}
				\lim_{n\rightarrow\infty}\dfrac{1}{n}\log|\widetilde{p_{n}}(z)|=U_{\nu}(z),
			\end{equation}
			point-wise for all $ z\in \mathbb{C}\setminus F $. Now, let $ D $ be a disk whose closure is contained in $ \mathbb{C}\setminus F $. Since the sequence of analytic functions $ \{\widetilde{p_{n}}^{\frac{1}{n}}\}_{n\in\Bbb{N}} $ is uniformly bounded on $ D $, by Montel's Theorem every subsequence has a further subsequence converging uniformly to some analytic function $ f $  satisfying  $ |f|=e^{U_{\nu}} $ on $ D $. This shows that \begin{equation}
				\lim_{n\rightarrow\infty}|\widetilde{p_{n}}|^{\frac{1}{n}}= e^{U_{\nu}}
			\end{equation}
			locally uniformly on $ \mathbb{C}\setminus F $. Note that uniformly convergence near $ \infty $ follows from equation $ U_{\nu}(z)=\log|z|+o(|z|^{-1}) $ as $ z\rightarrow\infty $. Finally, using (\ref{definition asymp. min. eq. 1}) and by \cite[Theorem 4.7]{SaTo} we observe that 
			\begin{eqnarray}
				\lim_{n\rightarrow\infty}|p_{n}|^{\frac{1}{n}} &=& (1/\mathrm{cap}(E))e^{U_{\nu}}\\
				&=& \exp(g_{\Omega})
			\end{eqnarray} 
			locally uniformly on $ \widehat{\mathbb{C}}\setminus F $.
		\end{proof}
		It is known that the filled Julia sets of the sequence of orthonormal polynomials w.r.t a fixed regular measure on a compact set are uniformly bounded \cite{juliasetortpoly} (see also \cite{filled julia set of cheb. polyn} for Chebyshev polynomials). We prove the analogue result for asymptotically minimal polynomials:
		\begin{lem}\label{boundedness of Kn}
			Let $\{p_{n}\}_{n\in\Bbb{N}} $ be as in Theorem \ref{mainteorem1} with uniformly bounded zeros.  Assume that the counting measures of zeros $ \{\mu_{n}\}_{n\geq1} $ of $\{p_{n}\}_{n\in\Bbb{N}} $ is  weak*-convergent to a measure $ \nu $. Then there exists $ R>0 $ and $ N\in\Bbb{N} $ such that 
			\begin{equation}
				K_{n} \subset p_n^{-1}(\overline{\mathnormal{D}(0,R)})\subset \mathnormal{D}(0,R).
			\end{equation} for all $ n\geq N$.
		\end{lem}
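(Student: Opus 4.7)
The plan is to convert the pointwise asymptotic $|p_{n}|^{1/n}\to \exp(g_{\Omega})$ provided by Lemma~\ref{Pn limiti ve potetial} into a uniform lower bound for $|p_{n}|$ on a fixed exterior $\{|z|\geq R\}$, and then to read off both inclusions by combining the maximum principle with elementary dynamics of $p_{n}$.

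First, by the uniform boundedness hypothesis on the zeros, I fix $R_{0}>0$ with
\[
E\cup\bigcup_{n}\{z\in\Bbb{C}:p_{n}(z)=0\}\subset F:=\overline{D(0,R_{0})}.
\]
Since $F$ is polynomially convex, Lemma~\ref{Pn limiti ve potetial} gives $|p_{n}|^{1/n}\to \exp(g_{\Omega})$ locally uniformly on $\widehat{\Bbb{C}}\setminus F$. The expansion $g_{\Omega}(z)=\log|z|-\log\mathrm{cap}(E)+O(|z|^{-1})$ at infinity allows me to pick $R>R_{0}$ large enough that $\inf_{|z|=R}g_{\Omega}(z)>\log 3$. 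Uniform convergence on the compact circle $\{|z|=R\}$ then produces $N\in\Bbb{N}$ with $|p_{n}(z)|\geq 2^{n}$ on $\{|z|=R\}$ for every $n\geq N$; after enlarging $N$, I may also assume $2^{n}>R$.

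To propagate this boundary estimate to the whole exterior $\{|z|\geq R\}$, I observe that every zero of $p_{n}$ lies in $F\subset D(0,R)$, so $1/p_{n}$ is holomorphic on $\widehat{\Bbb{C}}\setminus F$ and vanishes at infinity. The maximum principle applied to $1/p_{n}$ on $\{|z|>R\}\cup\{\infty\}$ therefore yields $|p_{n}(z)|\geq 2^{n}>R$ for all $|z|\geq R$ and $n\geq N$, which is precisely the inclusion $p_{n}^{-1}(\overline{D(0,R)})\subset D(0,R)$. This maximum-principle step is the main technical point, and is where the uniform boundedness of the zeros is genuinely used: it is what makes $1/p_{n}$ analytic on a common neighborhood of infinity for every $n$.

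Finally, for the remaining inclusion $K_{n}\subset p_{n}^{-1}(\overline{D(0,R)})$, the inclusion just established shows that for $|z|\geq R$ the whole orbit $\{p_{n}^{k}(z)\}_{k\geq 0}$ stays in $\{|w|\geq R\}$, and combining the elementary polynomial estimate $|p_{n}(w)|\geq |a_{n,n}|(|w|-R_{0})^{n}$ with $|a_{n,n}|^{1/n}\to 1/\mathrm{cap}(E)$ forces $|p_{n}^{k}(z)|\to\infty$. Hence $K_{n}\subset D(0,R)$, and forward invariance $p_{n}(K_{n})\subset K_{n}$ then gives $K_{n}\subset p_{n}^{-1}(\overline{D(0,R)})$ as required.
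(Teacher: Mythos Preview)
Your proof is correct and follows essentially the same approach as the paper's: obtain a uniform lower bound for $|p_n|$ on a large circle via Lemma~\ref{Pn limiti ve potetial}, push it to the whole exterior using that all zeros lie inside (you phrase this as the maximum principle for $1/p_n$, the paper calls it the minimum modulus principle), and deduce the two inclusions. The only cosmetic differences are that the paper works with $U_\nu$ directly while you use $g_\Omega$ (equivalent, since $g_\Omega=U_\nu-\log\mathrm{cap}(E)$ on $\widehat{\Bbb C}\setminus F$), and that the paper concludes via the identity $K_n=\bigcap_{k\geq 0}p_n^{-k}(\overline{D(0,R)})$ whereas you spell out the escape-to-infinity step using the factorization estimate $|p_n(w)|\geq |a_{n,n}|(|w|-R_0)^n$; your version is arguably more self-contained on that last point.
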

		\begin{proof}
			For each $ \varepsilon> \log^+ \big(\mathrm{cap}(E)\big)$ there exists $ R>0 $ such that $$ \cup_{n=1}^{\infty}\{z\in \Bbb{C}:p_n(z)=0\}\cup E\subset D(0,\frac{R}{2})$$ and $ U_{\nu}(z)>2\varepsilon $ for all $ z\in\partial\mathnormal{D}(0,R) $. This implies that
			\begin{equation}
				U_{\nu}(z)-\log \mathrm{cap}(E)\geq \varepsilon
			\end{equation}
			for all $ z\in \partial \mathnormal{D}(0,R) $. Then by Lemma \ref{Pn limiti ve potetial} and compactness of $ \partial \mathnormal{D}(0,R) $, there exists $ N\in\Bbb{N} $ such that \begin{equation}
				|p_{n}(z)|^{\frac{1}{n}}\geq e^{\frac{\varepsilon}{2}}
			\end{equation}
			for all $ n\geq N $ and for all $ z\in \partial \mathnormal{D}(0,R) $. By increasing $ N $ if necessary, we can assume that $ \log(R)<N\dfrac{\varepsilon}{2} $. This gives $ |p_{n}(z)|>R $ for all $ z\in \partial \mathnormal{D}(0,R) $.\\
			Since the zeros of $p_n $ are contained in $  \mathnormal{D}(0,R) $, the minimum modules principle implies that \begin{equation}
				p_{n}(\mathbb{C}\setminus\mathnormal{D}(0,R))\subset \mathbb{C}\setminus\overline{\mathnormal{D}(0,R)}
			\end{equation} 
			for all $ n\geq N $. Thus, the equality $ K_{n}=\bigcap_{k\geq 0}p_{n}^{-k}(\overline{\mathnormal{D}(0,R)}) $ yields  \begin{equation}
				K_{n}\subset p_{n}^{-1}(\overline{\mathnormal{D}(0,R)})\subset \mathnormal{D}(0,R)
			\end{equation}
			for all $ n\geq N $.
		\end{proof}
		
		We will also need following result in the sequel:
		
		\begin{lem}\label{counting measure of pn-w}
			Let $\{p_{n}\}_{n\in\Bbb{N}} $ be as in Theorem \ref{mainteorem1}. For any compact set $ V\subseteq\mathbb{C}\setminus \mathrm{Pc}(E) $ we have 
			\begin{equation}\label{limit in the counting measure of pn-w}
				\lim_{n\rightarrow\infty}\big(\sup_{w\in D(0,R)}\big(\frac1n \sum_{p_n(z)=w}1_V(z)\big)\big)=0
			\end{equation}
			where $1_V$ denotes the characteristic function of the set $V$.
		\end{lem}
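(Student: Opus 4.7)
\emph{Proof plan.} I would argue by contradiction via a tightness/weak* compactness argument, reducing the statement to a question about the asymptotic zero distribution of the perturbed polynomials $p_n - w_n$. Suppose the conclusion fails; then there exist $\varepsilon>0$, a subsequence $n_k\to\infty$, and points $w_{n_k}\in D(0,R)$ such that
\[
\frac{1}{n_k}\sum_{p_{n_k}(z)=w_{n_k}} 1_V(z)\ \geq\ \varepsilon.
\]
Since the zeros of $\{p_n\}$ are uniformly bounded, the family $\{\mu_n\}$ is tight, so after passing to a further subsequence I may assume $\mu_{n_k}\rightharpoonup\nu$ in the weak* topology and $w_{n_k}\to w_\infty\in\overline{D(0,R)}$.

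Next, consider the perturbed polynomials $q_{n_k}(z):=p_{n_k}(z)-w_{n_k}$, whose zeros are precisely the $w_{n_k}$-preimages counted on the left-hand side above. The sequence $\{q_{n_k}\}$ inherits asymptotic minimality from $\{p_{n_k}\}$: the leading coefficient is unchanged, so (\ref{definition asymp. min. eq. 1}) holds verbatim, while the bounds
\[
|a_{n_k,n_k}|\,\mathrm{cap}(E)^{n_k}\ \leq\ \|q_{n_k}\|_E\ \leq\ \|p_{n_k}\|_E+R
\]
(the left one being Chebyshev's extremal inequality, the right one the triangle inequality together with $|w_{n_k}|\leq R$), combined with $\tfrac{1}{n_k}\log\|p_{n_k}\|_E\to 0$, give $\tfrac{1}{n_k}\log\|q_{n_k}\|_E\to 0$, i.e.\ (\ref{definition asymp. min. eq. 2}) for $p=\infty$, and hence, by Proposition \ref{ASMp}, for every $p\in(0,\infty]$. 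Moreover, applying Lemma \ref{boundedness of Kn} to $\{p_{n_k}\}$, for $k$ large one has $p_{n_k}^{-1}(\overline{D(0,R)})\subset D(0,R)$, so in particular the zeros of $q_{n_k}$ all lie in $D(0,R)$, making the counting measures $\nu_{n_k}:=\tfrac{1}{n_k}\sum_{q_{n_k}(z)=0}\delta_z$ tight.

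Extract a further subsequence so that $\nu_{n_k}\rightharpoonup\nu_\infty$. By \cite[Lemma 3.3]{duncan} (the support containment invoked in the proof of Lemma \ref{Pn limiti ve potetial}, applied this time to $\{q_{n_k}\}$), we get $\mathrm{supp}(\nu_\infty)\subseteq \mathrm{Pc}(E)$. Since $V$ is compact and $V\cap \mathrm{Pc}(E)=\emptyset$, we have $\nu_\infty(V)=0$. On the other hand, the Portmanteau theorem applied to the closed set $V$ yields
\[
\nu_\infty(V)\ \geq\ \limsup_{k\to\infty}\nu_{n_k}(V)\ \geq\ \varepsilon\ >\ 0,
\]
a contradiction.

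The main obstacle is verifying asymptotic minimality of the \emph{perturbed} sequence $\{q_{n_k}\}=\{p_{n_k}-w_{n_k}\}$; once this is in place, the known result that zeros of asymptotically minimal polynomials accumulate on $\mathrm{Pc}(E)$ can be transferred to the $w$-preimage problem essentially for free. A secondary technical point is the non-uniqueness of weak* limits, which forces a three-stage subsequence extraction (for $\mu_{n_k}$, then $w_{n_k}$, then $\nu_{n_k}$), but each stage is standard tightness.
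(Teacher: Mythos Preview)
Your argument is correct, but it proceeds quite differently from the paper's. The paper gives a direct, quantitative estimate via the Stahl--Totik rational-function lemma (\cite[Lemma 1.3.2]{general orth. polyn.}): if $p_n-w$ had $l(n)$ zeros in $V$, one replaces groups of $N$ of those zeros $x_{n,j}\in V$ by points $y_{n,j}$ so that the resulting polynomial $(p_n-w)\prod_{j}\frac{z-y_{n,j}}{z-x_{n,j}}$ has $L^p(\tau)$-norm at most $\|p_n-w\|_{L^p(\tau)}\,b^{\lfloor l(n)/N\rfloor}$ for some fixed $b<1$. Comparing with the $L^p$-minimal monic polynomial forces $\tfrac{l(n)}{n}\to 0$, and the bound is visibly uniform in $w\in D(0,R)$.

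Your route instead transfers the problem back to the zero-distribution theory already available: you observe that the perturbed sequence $q_{n_k}=p_{n_k}-w_{n_k}$ is itself asymptotically minimal (same leading coefficient; sup-norm squeezed between $|a_{n_k,n_k}|\,\mathrm{cap}(E)^{n_k}$ and $\|p_{n_k}\|_E+R$), has uniformly bounded zeros by Lemma~\ref{boundedness of Kn}, and hence any weak$^*$ limit of its zero-counting measures is supported in $\mathrm{Pc}(E)$ by \cite[Lemma 3.3]{duncan}. The Portmanteau inequality on the closed set $V$ then yields the contradiction. This is more conceptual and avoids the rational-function construction entirely; the price is that the uniformity in $w$ comes only indirectly through the subsequence extraction, and you incur a mild forward reference to Lemma~\ref{boundedness of Kn} (which is harmless, since that lemma does not depend on the present one). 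One small point: the $R$ produced by Lemma~\ref{boundedness of Kn} need not coincide with the $R$ in the statement, but since the proof of that lemma allows any sufficiently large radius, this is easily arranged.
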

		In the proof of Lemma \ref{counting measure of pn-w}, we utilize the following result:
		\begin{lem}(\cite[Lemma 1.3.2]{general orth. polyn.})\label{rational funt bound by a}
			Let $ V, E\subseteq\mathbb{C} $ be two compact sets. If $V\subseteq\mathbb{C}\setminus \mathrm{Pc}(E) $ then there exists $ b<1 $ and $ N=N(E,V,b)\in\mathbb{N} $ such that for arbitrary $N$ points $ x_{1},\cdots,x_{N}\in V $ there exists $ N$ points $ y_{1},\cdots,y_{N}\in\mathbb{C}$ for which the rational function
			\begin{equation}
				r_{N}(z):=\prod_{j=1}^{N}\dfrac{(z-y_{j})}{(z-x_{j})}
			\end{equation}  
			has the sup norm $ \|r_{N}\|_{E}\leq b $. 
					\end{lem}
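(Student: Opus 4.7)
The plan is to construct the $y_j$'s as an $N$-point discretization of the balayage of the empirical measure $\mu_N := \frac{1}{N}\sum_{j=1}^{N}\delta_{x_j}$ onto $\mathrm{Pc}(E)$. Writing $\Omega := \mathbb{C}\setminus \mathrm{Pc}(E)$ and letting $g$ be its Green's function with pole at infinity, the starting point is that since $V \subset \Omega$ is compact and $g$ is continuous and strictly positive on $\Omega$, the quantity
$$\delta := \inf_{x\in V} g(x)$$
is strictly positive. This $\delta$ will drive the exponential decay of $\|r_N\|_E$.

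For given $x_1,\dots,x_N\in V$, I would let $\widehat{\mu}_N$ denote the balayage of $\mu_N$ onto $\mathrm{Pc}(E)$. By standard potential theory, $\widehat{\mu}_N$ is a probability measure supported on $\partial\Omega$, and for quasi-every $z \in \mathrm{Pc}(E)$ one has
\begin{equation*}
U_{\widehat{\mu}_N}(z) = U_{\mu_N}(z) - \int g\,d\mu_N \leq U_{\mu_N}(z) - \delta.
\end{equation*}
If $\widehat{\mu}_N$ itself happened to be $N$-atomic with atoms $y_1,\dots,y_N$, then taking these as the required points would give $\frac{1}{N}\log|r_N(z)| = U_{\widehat{\mu}_N}(z) - U_{\mu_N}(z) \leq -\delta$ on $E$, hence the much stronger estimate $\|r_N\|_E \leq e^{-N\delta}$.

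In general $\widehat{\mu}_N$ is not atomic, so the crucial step is to discretize it: I need to produce $y_1,\dots,y_N\in\mathbb{C}$ so that $\nu_N := \frac{1}{N}\sum_{j=1}^{N}\delta_{y_j}$ approximates $\widehat{\mu}_N$ in potential uniformly on $E$,
\begin{equation*}
\sup_{z\in E}\bigl|U_{\nu_N}(z) - U_{\widehat{\mu}_N}(z)\bigr| \leq \varepsilon_N,
\end{equation*}
with $\varepsilon_N\to 0$ as $N\to\infty$ and with $\varepsilon_N$ controlled \emph{uniformly} over the choice of $\{x_j\}\subset V$. Combined with the balayage inequality, this gives $U_{\nu_N}-U_{\mu_N}\leq -\delta + \varepsilon_N$ on $E$, i.e.\ $\|r_N\|_E \leq e^{-N(\delta-\varepsilon_N)}$. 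Given any prescribed $b\in(0,1)$, choosing $N$ so large that $\varepsilon_N<\delta/2$ and $e^{-N\delta/2}\leq b$ then completes the proof.

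The main technical obstacle is exactly this uniform discretization. The difficulty is that $E$ generally meets $\operatorname{supp}(\widehat{\mu}_N) \subset \partial\Omega \subset E$, so the kernel $\log|z-w|$ is singular on the relevant product $E\times \operatorname{supp}(\widehat{\mu}_N)$ and a naive equidistribution of $\widehat{\mu}_N$ does not directly yield a uniform bound. I would resolve this by first regularizing $\widehat{\mu}_N$ via a small push of its mass off $\partial\Omega$, either outward into $\Omega$ or inward into a bounded component of $\mathbb{C}\setminus E$ when available, which is permitted since $y_j\in\mathbb{C}$ is unconstrained, and estimating the resulting potential perturbation on $E$ via the modulus of continuity of the Green kernel. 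The regularized measure can then be discretized by weighted Fekete points or quantile-based equidistribution along $\partial\Omega$; the required uniformity in the choice of $\{x_j\}$ follows from weak-$*$ compactness of $\mathcal{P}(V)$ together with continuity of the balayage operator $\mu \mapsto \widehat{\mu}$.
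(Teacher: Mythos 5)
First, a point of reference: the paper does not prove this lemma at all; it is imported verbatim from \cite[Lemma 1.3.2]{general orth. polyn.}, so there is no internal proof to measure your attempt against. On its own merits, your balayage strategy does identify the correct mechanism behind the statement: the positivity of $\delta=\inf_{V}g_{\Omega}$, the identity $U_{\widehat{\mu}_N}=U_{\mu_N}-\int g_{\Omega}\,d\mu_N$ on $\mathrm{Pc}(E)$, and the resulting ideal bound $e^{-N\delta}$ are all sound, modulo two unaddressed technicalities: the balayage identity holds only quasi-everywhere on $\mathrm{Pc}(E)$ (the exceptional set of irregular boundary points may meet $E$), so upgrading to a sup-norm bound on all of $E$ needs an extra step such as replacing $E$ by a slightly larger regular compact set; and the whole setup tacitly assumes $E$ is non-polar so that $g_{\Omega}$ and the balayage exist.

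The genuine gap is the discretization step, which you rightly flag as the crux but do not carry out, and whose sketched resolution fails for a general compact $E$. (i) The two-sided estimate $\sup_{z\in E}|U_{\nu_N}(z)-U_{\widehat{\mu}_N}(z)|\le\varepsilon_N$ is untenable as stated once some $y_j$ lies on $\partial\Omega\subseteq E$, since then $U_{\nu_N}(y_j)=-\infty$; only the one-sided bound $U_{\nu_N}\le U_{\widehat{\mu}_N}+\varepsilon_N$ on $E$ is needed, and that is precisely the hard direction. (ii) For a general non-polar $E$ the outer boundary $\partial\Omega$ may be totally disconnected or fractal, so "quantile-based equidistribution along $\partial\Omega$" is unavailable, and the harmonic measures $\widehat{\delta_{x_j}}$ have no local regularity that is uniform in the configuration $\{x_j\}\subset V$, which is what a spacing-versus-potential estimate would require. (iii) Pushing mass a distance $\rho$ off $\partial\Omega$ into $\Omega$ moves atoms away from nearby points $z\in E$ and thus increases $\frac1N\log|z-y_j|$ by roughly $\frac1N\log\bigl(\rho/\mathrm{dist}(z,\mathrm{supp})\bigr)$, which is unbounded as $z$ approaches the displaced support; controlling this by a "modulus of continuity of the Green kernel" presupposes regularity of $E$ that is not assumed, and bounded complementary components into which one could push the mass need not exist. (iv) Weak-$*$ compactness of $\mathcal{P}(V)$ together with continuity of balayage cannot rescue the uniformity: weak-$*$ convergence controls logarithmic potentials from above only pointwise (by upper semicontinuity of the kernel), not in sup norm on $E$, and it provides no rate $\varepsilon_N$ uniform over configurations. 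As written the argument is therefore incomplete at its decisive step; to repair it one must either impose geometric regularity on $E$, or bypass the hands-on discretization altogether, e.g.\ by recasting $\inf_{y_1,\dots,y_N}\|\prod_j(z-y_j)/(z-x_j)\|_E$ as a weighted Chebyshev problem with external field $-U_{\mu_N}$ and invoking the principle of domination, which is the standard route to such estimates in \cite{general orth. polyn., SaTo}.
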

					
		\begin{proof}[\textbf{Proof of Lemma \ref{counting measure of pn-w}}] We prove the case $p\in (0,\infty)$ for a regular measure $\tau$. For $w\in D(0,R)$ we let $ x_{n,1},\cdots,x_{n,l(n)} $ be the roots of $ p_n-w $ in $V$. Then by Lemma \ref{rational funt bound by a} there exist $ b<1,\ N=N(E,V,b)\in\mathbb{N} $ and points  $ y_{n,1},\cdots, y_{n,l(n)}\in \mathbb{C}$ such that 
			\begin{equation}\label{normbound}
				\parallel (p_n(z)-w)\prod_{j=1}^{N{\lfloor\frac{l(n)}{N}\rfloor}}\dfrac{z-y_{n,j}}{z-x_{n,j}}\parallel_{L^{p}(\tau)}\leq \parallel p_n-w\parallel_{L^{p}(\tau)}b^{\lfloor\frac{l(n)}{N}\rfloor}.
				\end{equation}
				
				Now, let $ q_{n} $ be the monic polynomial of degree $n$ with minimal $ L^{p}(\tau)$ norm. Then by (\ref{normbound}) we obtain
				\begin{equation*}
					\|a_{n,n}q_{n}\|_{L^{p}(\tau)}\leq\|p_n-w\|_{L^{p}(\tau)}b^{\lfloor\frac{l(n)}{N}\rfloor}\leq (\|p_n\|_{L^{p}(\tau)}+R\|\tau\|^{\frac1p})b^{\lfloor\frac{l(n)}{N}\rfloor}
				\end{equation*}
				which in turn implies that
				\begin{equation}
					0\leq \frac1n\lfloor\frac{l(n)}{N}\rfloor\log\frac1b\leq \frac1n\log\bigg(\frac{\|p_n\|_{L^{p}(\tau)}+R\|\tau\|^{\frac1p}}{\|a_{n,n}q_n\|_{L^{p}(\tau)}}\bigg).
				\end{equation}
The right hand side is independent of $w$ and by asymptotic minimality of $p_n$ it tends to zero. Hence, the assertion follows.\\				
				The proof for the case $p=\infty$ is almost identical and omitted.
			\end{proof}
			
			The following lemma gives a relation between Green's functions for $ \Omega_{n} $ and $ \Omega $ with pole at infinity.
			\begin{lem}\label{greens functions}
				Let $\{p_{n}\}_{n\in\Bbb{N}} $ be as in Theorem \ref{mainteorem1}. Assume that the counting measures $ \{\mu_{n}\}_{n\geq1} $ of $\{p_{n}\}_{n\in\Bbb{N}} $ is  weak*-convergent to a measure $ \nu $. Then \begin{equation}\label{limit in green function}
					\limsup_{n\rightarrow\infty} g_{n}(z)\leq g_{\Omega}(z)
				\end{equation}
				locally uniformly on $ \mathbb{C} $ where $ g_{n} $ (resp. $ g_{\Omega} $) is the Green's function for $ \Omega_{n} $ (resp. $ \Omega $) with pole at infinity. 
			\end{lem}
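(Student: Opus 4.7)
The goal is to establish the pointwise inequality $g_n(z) \le g_\Omega(z) + \epsilon_n$ with an error $\epsilon_n \to 0$ independent of $z$; this immediately delivers the desired locally uniform bound. My plan rests on three ingredients: the invariance $g_n(p_n(z)) = n\,g_n(z)$, the Bernstein-Walsh inequality $|p_n(z)| \le \|p_n\|_E \exp(n\,g_\Omega(z))$, and a uniform-in-$n$ estimate $g_n(z) \le \log^+|z| + C$ valid on all of $\mathbb{C}$.

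For the third ingredient I would first fix $R>0$ and $N_0$ such that $K_n \subset D(0,R)$ for all $n \ge N_0$ via Lemma \ref{boundedness of Kn}; then $\mathrm{supp}(\omega_n) \subset K_n \subset D(0,R)$ forces
$$U_{\omega_n}(z) = \int \log|z-w|\,d\omega_n(w) \le \log(|z|+R) \le \log^+|z| + \log(1+R).$$
Using the representation $g_n = U_{\omega_n} - \log\mathrm{cap}(K_n)$ together with (\ref{Juliacap}) and hypothesis (\ref{definition asymp. min. eq. 1}), one obtains $\log\mathrm{cap}(K_n) \to \log\mathrm{cap}(E)$, so this quantity stays bounded in $n$ and yields $g_n(z) \le \log^+|z| + C$ with $C$ independent of $n$.

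Applying this uniform bound at the point $p_n(z)$ and invoking the functional equation gives
$$g_n(z) = \tfrac{1}{n}g_n(p_n(z)) \le \tfrac{1}{n}\log^+|p_n(z)| + \tfrac{C}{n}.$$
Bernstein-Walsh furnishes $\log|p_n(z)| \le \log\|p_n\|_E + n\,g_\Omega(z)$; since $g_\Omega \ge 0$, taking positive parts and dividing by $n$ produces $\tfrac{1}{n}\log^+|p_n(z)| \le g_\Omega(z) + \tfrac{1}{n}|\log\|p_n\|_E|$. Stacking the two inequalities,
$$g_n(z) \le g_\Omega(z) + \tfrac{1}{n}\bigl|\log\|p_n\|_E\bigr| + \tfrac{C}{n},$$
and Proposition \ref{ASMp} provides $\tfrac{1}{n}\log\|p_n\|_E \to 0$, completing the argument (the error terms are uniform in $z$, so the convergence is in fact locally uniform without further work).

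The main obstacle is securing the uniform estimate $g_n(z) \le \log^+|z| + C$: it relies crucially on \emph{both} the containment $K_n \subset D(0,R)$ provided by Lemma \ref{boundedness of Kn} (so $\mathrm{supp}(\omega_n)$ lies in a fixed disk) \emph{and} the convergence $\mathrm{cap}(K_n) \to \mathrm{cap}(E)$ coming from the hypothesis on the leading coefficients. Absent either, the telescoped error from the functional equation could blow up and $\limsup g_n \le g_\Omega$ would be out of reach.
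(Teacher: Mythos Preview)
Your argument is correct and in fact yields the stronger conclusion that the error $g_n(z)-g_\Omega(z)$ is bounded by a sequence tending to zero \emph{uniformly} in $z$, not merely locally uniformly. The route, however, differs from the paper's in two respects. First, the paper imports the two-sided estimate $\|g_n-\tfrac{1}{n}\log^+|p_n|\|_\infty\le M/n$ from \cite[Proposition~3.5]{filled julia set of cheb. polyn} as a black box, whereas you derive the (sufficient) one-sided bound $g_n(z)\le\tfrac{1}{n}\log^+|p_n(z)|+C/n$ directly from the functional equation $g_n\circ p_n=n\,g_n$ together with the elementary potential-theoretic estimate $g_n=U_{\omega_n}-\log\mathrm{cap}(K_n)\le\log^+|z|+C$. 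Second, for the bound on $\tfrac{1}{n}\log^+|p_n|$ the paper treats the cases $p\in(0,\infty)$ and $p=\infty$ separately, invoking \cite[Theorem~3.2.1]{general orth. polyn.} for the former and Bernstein's lemma for the latter; you instead reduce everything to the sup-norm case via Proposition~\ref{ASMp} and then apply Bernstein--Walsh once. Your approach is more self-contained (no external citations beyond what the paper already establishes) and gives a cleaner quantitative bound; the paper's route has the minor advantage of recording the two-sided estimate (\ref{greens functions, equation 1}), which is reused later in the proof of Theorem~\ref{approximation by filled julia sets}.
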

			\begin{proof}
				By (\ref{definition asymp. min. eq. 1}) and (\ref{Juliacap}), we have $\displaystyle \lim_{n\rightarrow\infty}\mathrm{cap}(K_{n})=\mathrm{cap}(E)>0$. Thus, there exists $ C\in(0,1) $ such that $ \mathrm{cap}(K_{n})\geq C $ for $ n$ sufficiently large. Moreover, by Lemma \ref{boundedness of Kn} there exists $ R>0 $ and $ N\in\mathbb{N} $ such that $ K_{n}\subset p_n^{-1}(\overline{D(0,R)})\subset D(0,R) $ for all $ n\geq N $. Hence by \cite[Proposition 3.5]{filled julia set of cheb. polyn}, there exists $ N\in\mathbb{N} $ and $ M>0 $  such that \begin{equation}\label{greens functions, equation 1}
					\parallel g_{n}(z)-\dfrac{1}{n}\log^{+}|p_{n}(z)|\parallel_{\infty}\leq\dfrac{M}{n}
				\end{equation}
				for all $ n\geq N $.
				
				Now, let $ p\in(0,\infty) $. Then by \cite[Theorem 3.2.1]{general orth. polyn.} and \cite[Remark 3.2.2]{general orth. polyn.} 
				\begin{equation}
					\limsup_{n\rightarrow\infty}\bigg(\dfrac{|p_{n}(z)|}{\parallel p_n\parallel_{L^{p}(\tau)}}\bigg)^{\frac{1}{n}}\leq e^{g_{\Omega}(z)}
				\end{equation}
				locally uniformly for $ z\in\mathbb{C} $. This implies that 
				\begin{equation}\label{greens functions, equation 3}
					\limsup_{n\rightarrow\infty} (|p_{n}(z)|)^{\frac{1}{n}}\leq e^{g_{\Omega}(z)}
				\end{equation}
				locally uniformly on $\mathbb{C} $. 
				
				For $ p=\infty$, using (\ref{definition asymp. min. eq. 2}) and Bernstein's Lemma (see \cite[Theorem 5.5.7]{ransfordbook}), we obtain  \begin{equation}\label{greens function, equation 4}
					\limsup_{n\rightarrow\infty} (|p_{n}(z)|)^{\frac{1}{n}}\leq e^{g_{\Omega}(z)}
				\end{equation}
				uniformly on $ \mathbb{C} $. Hence, the assertion follows by combining (\ref{greens functions, equation 1}), (\ref{greens functions, equation 3}) and (\ref{greens function, equation 4}). 
			\end{proof}
The next example illustrates that we need both the convergence of $n^{th}$ roots of the leading coefficients and the corresponding norms in order to have convergence of the corresponding Brolin measures or filled Julia sets in the suitable sense: 			
	\begin{example}\label{Ex}
Let $p_n(z)=2^{n^2}z^n$. Then $p_n(z)/a_{n,n}=z^n$ is minimal on the unit circle but the filled Julia sets $K_n$ of $p_n$ decreases to the point at the origin.	
	\end{example}		
			\begin{proof}[\textbf{Proof of the Theorem \ref{mainteorem1}}]
				First, we will show that $\{\omega_{n}\}_{n\in\Bbb{N}} $ is sequentially pre-compact with respect to the weak$^{*} $-topology. Indeed, for each subsequence $ \{\omega_{n_{k}}\}_{k\in\Bbb{N}}$, since support of $ \mu_{n_{k}} $'s are uniformly bounded, by Helly's Theorem 
				there is a further subsequence 	such that the empirical measure of zeros $ \mu_{n_{k_l}} $ weak* converges to a probability measure $\nu$. Then by Lemma \ref{boundedness of Kn}, $ K_{n_{k_{l}}} $'s are uniformly bounded and hence the Brolin measures $\{ \omega_{n_{k_{l}}}\}_{l\geq1} $ has a  weak* convergent subsequence. 
				
				Next, by  Brolin's Theorem \cite{brolin}, for any measurable function $ f:\mathbb{C}\rightarrow\mathbb{C} $ and for all $ n\in\mathbb{N} $, \begin{equation}
					\int_{\mathbb{C}}f(z)d\omega_{n}(z)=\dfrac{1}{n}\int_{\mathbb{C}}\bigg(\sum_{p_{n}(w)=z}f(w)\bigg)d\omega_{n}.
				\end{equation}
				Now, let $ \sigma $ be a weak* limit of a subsequence $ \{\omega_{n_{k}}\}_{k\in\Bbb{N}}$. We will show that $ supp(\sigma)\subset J=\partial \mathrm{Pc}(E) $. Passing to a further subsequence if necessary, we may and we do assume that the normalized measure of zeros $\mu_{n_{k}}$ are weak* convergent to a measure $\nu$.  Let $ V\subset\mathbb{C} $ be a compact set with $ V\cap \mathrm{Pc}(E)=\emptyset $. Then, by Lemma \ref{boundedness of Kn} the filled Julia sets $K_n$ are uniformly bounded and by Lemma \ref{counting measure of pn-w} we have
				\begin{equation}
					\omega_{n_{k}}(V)=\int_{\mathbb{C}}1_{V}(z)d\omega_{n_{k}}(z)=\int_{\mathbb{C}}\bigg(\dfrac{1}{n_{k}}\sum_{p_{n_{k}}(w)=z}1_{V}(w)\bigg)d\omega_{n_{k}}(z)\xrightarrow{k\rightarrow\infty}0.
				\end{equation}
				Hence, $ \sigma(V)=0$. Since this is true for all compact set disjoint from $ \mathrm{Pc}(E) $, we deduce that $ supp(\sigma)\subset \mathrm{Pc}(E) $.
				
				On the other hand, by Lemma \ref{greens functions} we have that $\displaystyle  \lim_{n\rightarrow\infty}g_{n}=0 $ uniformly on any compact subset of the interior of $ \mathrm{Pc}(E) $ since $ g_{\Omega}\equiv 0 $ in the interior of $ \mathrm{Pc}(E) $. 
				
						Let $ L $ be a compact subset of the interior $Int(\mathrm{Pc}(E)) $ and $ U$ be a compact neighborhood of $ L $ contained in $Int(\mathrm{Pc}(E))$ and $ \varphi $ be a $ C^{2} $ function with support in $ supp(\varphi)\subset U$ satisfying $ 0\leq \varphi \leq 1$ and $ \varphi\equiv1 $ on $ L $. Then we have
						\begin{eqnarray*}
							\sigma(L) &\leq & \int\varphi(z)d\sigma(z)\\ 
							&=& \lim_{k\rightarrow\infty}\int\varphi(z)d\omega_{n_{k}}(z)\\
							&=& \lim_{k\rightarrow\infty}\dfrac{1}{2\pi}\int \Delta\varphi(z)g_{n_{k}}(z)dA(z)=0
						\end{eqnarray*}
						where $ dA(z) $ is the standard Euclidean area element.
						
						On the other hand, by \cite[Lemma A.3.3]{ransfordbook} 
						$$\sigma(Int(\mathrm{Pc}(E)))=\sup\{\sigma(L): L\subset Int(\mathrm{Pc}(E))\ \text{and}\ L\ \text{is compact}\}$$ 
						Hence, we conclude that $ \sigma(Int(\mathrm{Pc}(E))=0 $ and $ supp(\sigma)\subset \partial \mathrm{Pc}(E)$.
						
						Finally, we will show that $  \omega_{n} \ \xrightarrow{weak^{*}} \ \omega_E  $ as $ n\rightarrow\infty $. By \cite[Lemma 3.3.3]{ransfordbook}, we have 
						\begin{equation}
							\limsup_{k\rightarrow\infty}\mathnormal{I}(\omega_{n_{k}})\leq \mathnormal{I}(\sigma).
						\end{equation}
						On the other hand, \begin{equation}
							\limsup_{n\rightarrow\infty}\mathnormal{I}(\omega_{n_{k}})=\mathnormal{I}(\omega_E)
						\end{equation}
						by asymptotic minimality \ref{definition asyp. min.}  and (\ref{Juliacap}) this in turn implies that $ \mathnormal{I}(\omega_E)\leq\mathnormal{I}(\sigma) $. Since $ \omega_E $ is the unique measure of maximal energy, we deduce that $ \sigma=\omega_E $. Since this is true for all weak*-convergent subsequences of $ \{\omega_{n}\}_{\geq1} $ we conclude that $\omega_{n} \to \ \omega_E$ in the weak* topology.
					\end{proof}
					
					\subsection{Examples}\label{examples}
		First, we observe that there are sequences of asymptotically minimal polynomials such that the zeros are not uniformly bounded. In particular, the assumption in Theorem \ref{mainteorem1} on the zeros can not be removed. 
\begin{example}\label{cexample}
Let $c_n\in \Bbb{C}$ be a sequence such that $|c_n|\to \infty$ and $\frac1n\log |c_n|\to 0$ as $n\to \infty$. Let also $$p_n(z)=z^{n-1}(z-c_n)\ \ \text{for}\ n\geq 1.$$ Then it is easy to see that $\{p_n\}_{n\geq 1}$ is asymptotically minimal on the unit circle $S^1$ with respect to the equilibrium measure $\frac{d\theta}{2\pi}$. On the other hand, $c_n\in K_n$ and hence $\{K_n\}_{n\geq 1}$ is not uniformly bounded in $\Bbb{C}$.
\end{example}					
					Next, we review natural classes of polynomials that fit into the framework of Theorem \ref{mainteorem1}. Note that orthonormal polynomials associated with a regular measure and normalized Chebyshev polynomials on a non-polar compact set are standard examples of extremal polynomials.
				\begin{example}[$L^p$-minimal polynomials]
			Let $E\subset \Bbb{C}$ be a non-polar compact set and the measure $\tau\in\textbf{Reg}$. For $p\in [1,\infty]$ there exists unique polynomials $p_n$ with minimal $L^p(\tau)$-norm satisfying 
			$$\lim_{n\to\infty}\frac1n\log\|p_n\|_{L^p(\tau)}=\log \mathrm{cap}(E).$$ Then the sequence $\{\frac{p_n}{\|p_n\|_{L^p(\tau)}}\}_{n\geq 1}$ is asymptotically minimal on $E$ (\cite[\S 3]{general orth. polyn.}). We remark that the case $p=\infty$ corresponds to normalized Chebyshev polynomials.
				\end{example}		
						\begin{example}[Fekete Polynomials]
							Let $ E $ be a non-polar compact set. For $ n\geq2 $, we denote $n$-tuple of points $w_{1},\cdots,w_{n}\in E $  such that the supremum $\sup\{\prod_{j<k}|z_{j}-z_{k}|^{\frac{2}{n(n-1)}}\} $ is attained at these points. The points $F_n:=(w_{1},\cdots,w_{n})$  are called Fekete points of order $n$. We remark that Fekete points are not unique in general but do exists by compactness of $E$. The Fekete polynomial associated with $F_n$ is given by
							\begin{equation}
								q_{n}(z)=\prod_{j=1}^{n}(z-w_{j})
							\end{equation}
							By \cite[\S5]{ransfordbook} 
							\begin{equation}\label{example 1, equation}
								\lim_{n\rightarrow\infty}\| q_{n}\|_E^{\frac{1}{n}}=\mathrm{cap}(E).
							\end{equation} Thus, the normalized sequence $p_n=\dfrac{q_{n}}{\|q_{n}\|_E}$ is asymptotically minimal on $ E $. Note that by construction all the zeros of $p_n$ lie in $E$.
						\end{example}

						\begin{example}[Faber Polynomials]
							Let $ E $ be a non-polar compact set such that the unbounded component $ \Omega $ of $ \widehat{\mathbb{C}}\setminus E $ is simply connected. Let $ \phi:\Omega\rightarrow\widehat{\mathbb{C}}\setminus\{z: \parallel z\parallel\leq1\} $ be the (unique) conformal map such that $ \phi(\infty)=\infty $ and $ \phi'(\infty)>0 $. It is well known that \begin{equation}
								\phi(z)=\dfrac{z}{cap (E)}+a_{0}+\dfrac{a_{1}}{z}+\cdots .
							\end{equation}
							The Faber polynomial $F_{n}$ of degree $n$ is defined by the equation
							\begin{equation}
								F_{n}(z)=\phi(z)^{n}+\mathcal{O}(\frac{1}{z})  \ \ \  z\rightarrow\infty.
							\end{equation}
							These polynomials satisfy (\ref{example 1, equation}) (see \cite{defin faber poly}). Thus, the normalized sequence\\ $p_{n}=\dfrac{F_{n}}{\| F_{n}\|_E}$ is asymptotically minimal on $ E $. Furthermore, if $ E $ is convex then all zeros of $p_n $ lies in the interior of $ E $ (see \cite[Theorem 2]{zeros of faber poly}). Hence, Theorem \ref{mainteorem1} applies.
						\end{example}
						
						\begin{example}[Polynomials with bounded coefficients] 	
						Let				
				$$p_n(z):=z^n+a^n_{n-1}z^{n-1}+\dots+ a^n_0$$ where $|a^n_j|< M$ for some fixed $M>0$. Then it is easy to see that $\{p_n\}_{n\in\Bbb{N}}$ is asymptotically minimal on the unit circle $S^1:=\{z\in\Bbb{C}: |z|=1\}$ with respect to the equilibrium measure $\frac{1}{2\pi}d\theta$. Moreover, Cauchy bounds imply that if $p_n(z)=0$ then $|z|\leq 1+\max_{0 \leq j\leq n-1}|a_j^n|$. In particular, zeros of $p_n$ are uniformly bounded and contained in the disc $D(0,M+1)$. Then it follows from Theorem \ref{mainteorem1} that their Brolin measures $\omega_n\to \frac{1}{2\pi}d\theta$ in the weak* topology.
						
						\end{example}
						
						\section{Geometric Limit of Filled Julia Sets}
						
						In this section, we focus on approximating planar regular compact subsets by filled Julia sets of asymptotically minimal polynomials. To this end, we consider Klimek and Hausdorff distances on such sets. 						
						\subsection{Topology of Compact Sets}
						
						We denote the collection of all non-empty compact subsets of $\Bbb{C}$ by $\mathscr{K}$. The classical Hausdorff metric $\chi$ on $\mathscr{K}$ is defined by  						
						\begin{equation*}
							\chi(A,B)=max(h(A,B),h(B,A))
						\end{equation*} where 
						\begin{equation*}
							h(A,B)=\sup_{a\in A}\inf_{b\in B} \parallel a-b\parallel.
						\end{equation*}
						The pair $(\mathscr{K},\chi)$ forms a complete metric space. If $ \{A_{n}\}_{n\in\Bbb{N}}\subset\mathscr{K} $ is a uniformly bounded sequence of compact sets, we define the sets 
						\begin{equation}
							\liminf_{n\rightarrow\infty} A_{n}:=\{z\in\mathbb{C}:\exists\{z_{n}\}, A_{n}\ni z_{n}\xrightarrow{n\rightarrow\infty}z\},
						\end{equation}
						and
						\begin{equation}
							\limsup_{n\rightarrow\infty} A_{n}:=\{z\in\mathbb{C}:\exists\{n_{k}\}, n_{k}\nearrow \infty \ and \ \exists\{z_{n_{k}}\}, A_{n_{k}}\ni z_{n_{k}}\xrightarrow{k\rightarrow\infty}z\}.
						\end{equation}
						It is easy to see that a uniformly bounded sequence $\{A_n\}_{n\in\Bbb{N}}\in \mathscr{K}$ is pre-compact in $(\mathscr{K},\chi)$ and the sets $\displaystyle \liminf_{n\rightarrow\infty}A_{n}$ and $\limsup_{n\rightarrow\infty}A_{n} \in \mathscr{K}$ (cf. \cite{juliasetortpoly}, Lemma 3.1). Moreover, $ \{A_{n}\}_{n\in\Bbb{N}} $ converges to $ A $ with respect to Hausdorff metric, denoted by $\displaystyle \lim_{k\rightarrow\infty} A_{n}= A $, if and only if \linebreak $\displaystyle\liminf_{n\rightarrow\infty} A_{n}= \limsup_{n\rightarrow\infty} A_{n}=A $. 						
						
						Next, we denote the collection of all polynomially convex compact regular subsets of $\Bbb{C}$ by $\mathscr{R}$. M. Klimek \cite{Klimek} defined a natural metric by using Green's functions: for regular compact subsets $ E, F $ of $ \mathbb{C} $, we let $g_{\Omega_{E}}, g_{\Omega_{F}} $ be Green's functions with the pole at infinity for $ \mathbb{C}\setminus \mathrm{Pc}(E)$ and $ \mathbb{C}\setminus \mathrm{Pc}(F) $ respectively. The Klimek distance between $ E $ and $ F $ is defined by 
						\begin{equation}\label{Gamma}
							\Gamma(E,F):=max(\parallel g_{\Omega_{E}}\parallel_{F},\parallel g_{\Omega_{F}}\parallel_{E})=\parallel g_{\Omega_{E}}-g_{\Omega_{F}}\parallel_{\mathbb{C}}. 
						\end{equation}
						The Klimek distance $\Gamma$ induces a psuedo-metric on regular compact subsets of $ \mathbb{C} $ and a metric on $\mathscr{R}$. In fact, the pair $(\mathscr{R},\Gamma)$ forms a complete metric space \cite{Klimek}.					
						
						We remark that the topologies induced by Hausdorff and Klimek metrics on $\mathscr{R}$ are different. In particular, convergence in Klimek distance does not imply convergence in Hausdorff distance (see Example \ref{dynamicalexample} below).
						On the other hand, there is a relation between these two metrics for some special cases. Klimek showed that if $ E_{n},E $ are regular, polynomially convex, connected subsets of the complex plane containing the origin, and if $ \{ E_{n}\}_{n\in\Bbb{N}} $ converges to $ E $ in Hausdorff topology then $ \{E_{n}\}_{n\in\Bbb{N}} $ converges to $ E $ in Klimek distance (\cite[Proposition 1]{Klimek}). 
						
						Recall that for $E\in \mathscr{K}$ and $\delta>0$ the \textit{modulus of continuity} is defined by 
						$$\omega_E(\delta)=\sup\{g_{\Omega_{E}}(z): dist(z,E)\leq \delta\}.$$ In particular, $E\in\mathscr{K}$ is regular if and only if $\lim_{\delta\to 0^+}\omega_E(\delta)=0$. In general convergence in Hausdorff metric does not imply convergence in Klimek metric but the following is known:			
						
						\begin{prop}\cite{Siciak2}\label{convergence}
							A subfamily $\mathcal{E}\subset \mathscr{R}$ is pre-compact with respect to Klimek topology if and only if the following hold
							\begin{enumerate}
								\item The family $\mathcal{E}$ is uniformly bounded that is there exists $R>0$ such that $E\subset B(0,R)$ for all $E\in \mathcal{E}$ 
								\item The family $\mathcal{E}$ has equicontinuity property that is $\displaystyle\lim_{\delta\to 0^+}[\sup_{E\in\mathcal{E}}\omega_E(\delta)] =0$ 
							\end{enumerate}
						\end{prop}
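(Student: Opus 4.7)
The plan is to prove the two implications separately, necessity by direct contradiction and sufficiency via an Arzel\`a--Ascoli extraction.

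\emph{Necessity.} Both conditions follow by contradiction using a Klimek-convergent subsequence. If (1) fails, take $E_n\in\mathcal{E}$ and $z_n\in E_n$ with $|z_n|\to\infty$, and extract $E_{n_k}\to F$ in $\Gamma$. Since $g_{E_{n_k}}(z_{n_k})=0$, the identity $\Gamma(F,E_{n_k})=\|g_F-g_{E_{n_k}}\|_{\Bbb{C}}$ forces $g_F(z_{n_k})\to 0$, contradicting $g_F(z)=\log|z|+O(1)\to\infty$. If (2) fails, pick $\varepsilon_0>0$, $\delta_j\to 0$, $E_j\in\mathcal{E}$, and $z_j$ with $\mathrm{dist}(z_j,E_j)\le\delta_j$ and $g_{E_j}(z_j)\ge\varepsilon_0$. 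By (1) the $z_j$ are bounded, so after extraction $E_{j_k}\to F$ in $\Gamma$ and $z_{j_k}\to z_\infty$. Choose $w_{j_k}\in E_{j_k}$ with $w_{j_k}\to z_\infty$; uniform convergence of Green's functions and continuity of $g_F$ then give simultaneously $g_F(z_\infty)=\lim g_{E_{j_k}}(w_{j_k})=0$ and $g_F(z_\infty)=\lim g_{E_{j_k}}(z_{j_k})\ge\varepsilon_0$---a contradiction.

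\emph{Sufficiency.} Given $\{E_n\}\subset\mathcal{E}$, use (1) and Blaschke selection to pass to a Hausdorff-convergent subsequence $E_{n_k}\to F$, and set $F^*:=\mathrm{Pc}(F)$. One verifies $F^*\in\mathscr{R}$ since condition (2) is inherited by the Hausdorff limit. I will show $\Gamma(E_{n_k},F^*)\to 0$. The functions $z\mapsto g_{E_n}(z)-\log(1+|z|)$ extend continuously to $\widehat{\Bbb{C}}$, and combining (1), (2), Harnack's inequality on $\Omega_{E_n}$, and the common leading term $\log|z|$ at infinity shows this family is equicontinuous and uniformly bounded on $\widehat{\Bbb{C}}$. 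Arzel\`a--Ascoli yields a further subsequence $g_{E_{n_k}}\to g^*$ uniformly on $\Bbb{C}$, with $g^*\ge 0$ subharmonic on $\Bbb{C}$, harmonic off $F^*$, and of logarithmic growth at infinity. Hausdorff convergence together with (2) gives $g^*\equiv 0$ on $F$, hence on $F^*$; Siciak's characterization of the Green's function as the envelope of admissible subharmonic functions then identifies $g^*=g_{F^*}$, so that $\Gamma(E_{n_k},F^*)\to 0$.

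\emph{Main obstacle.} The key technical difficulty is converting the scalar hypothesis (2)---a modulus-of-continuity bound on each individual $g_{E_n}$---into joint equicontinuity and uniform boundedness of the family $\{g_{E_n}\}$ on $\widehat{\Bbb{C}}$. Near each $E_n$ condition (2) delivers this directly; in the intermediate annular region one must invoke Harnack's inequality for positive harmonic functions in $\Omega_{E_n}$ and track the logarithmic pole at infinity, which in turn requires $\mathrm{cap}(E_n)$ to be bounded away from $0$ and $\infty$ along the sequence---itself a consequence of (1), (2), and regularity of the individual sets.
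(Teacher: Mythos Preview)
The paper does not give its own proof of this proposition: it is stated with a citation to \cite{Siciak2} and used as a black box. So there is no ``paper's own proof'' to compare against; your task here was really to reconstruct Siciak's argument.

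That said, your outline is essentially the standard one and is correct in spirit. The necessity arguments are clean: both rely on the identity $\Gamma(E,F)=\|g_E-g_F\|_{\Bbb{C}}$, which immediately converts $\Gamma$-convergence into uniform convergence of Green's functions, and the contradictions you draw are valid. For sufficiency, the Blaschke--selection--then--Arzel\`a--Ascoli strategy is exactly what Siciak does, and you have correctly identified the genuine work as establishing joint equicontinuity of $\{g_{E_n}-\log(1+|z|)\}$ on $\widehat{\Bbb{C}}$.

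One point where you are slightly glib: you assert that $F^*=\mathrm{Pc}(F)\in\mathscr{R}$ because ``condition (2) is inherited by the Hausdorff limit,'' but this is not automatic and is in fact a \emph{consequence} of the uniform convergence $g_{E_{n_k}}\to g^*$ you establish afterward, not something you can invoke beforehand. A cleaner logical order is: first run Arzel\`a--Ascoli to get a uniform limit $g^*$; then observe $g^*\ge 0$, $g^*$ is subharmonic, harmonic off $F^*$, vanishes on $F^*$ (using (2) and Hausdorff convergence), and has the right growth at infinity; finally conclude $g^*=g_{F^*}$ and hence $F^*$ is regular. Also, your appeal to Harnack's inequality for equicontinuity away from the sets is slightly off---Harnack gives multiplicative, not additive, control. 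What you actually want is that on any compact $L$ disjoint from $F$, the $g_{E_{n_k}}$ are eventually harmonic and uniformly bounded on a fixed neighborhood of $L$, so interior gradient estimates (or just the mean-value property) give equicontinuity there. The uniform bound itself follows from the maximum principle, condition (1), and the normalization at infinity.
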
						
						In particular, if $E_n\in \mathscr{R}$ and $\chi(E_n,E)\to0$ for some $E\in \mathscr{R}$ then $\Gamma(E_n,E)\to0$ if and only if $\displaystyle\lim_{\delta\to 0^+}[\sup_{n}\omega_{E_n}(\delta)] =0$. 						
						
						\subsection{Convergence of Filled Julia Sets}
						Through this section, let $ E $ be a compact non-polar subset of $ \mathbb{C} $, $ \Omega $ be the unbounded  component of $ \mathbb{C}\setminus E $ and $ g_{\Omega} $ be the Green's function for $ \Omega $ with pole at $ \infty $. Moreover,  let $ \{p_{n}\}_{n\geq 1} $ be an asymptotically minimal on $ E $, we let $ \Omega_{n},J_{n},K_{n} $ and $ g_{n} $ denote the basin of attraction for $ \infty $, the Julia set, the filled Julia set of $p_n $ and the dynamical Green's function for $p_n $ respectively. 
						
		For $ s>0 $ and a compact set $ E\subset\mathbb{C}$ we denote by $ E^{s}:=\{z\in\mathbb{C}: dist(z,E)<s\} $. Clearly, $ E^{s}$ forms a neighborhood base of the set $E$ in $\Bbb{C}$. Furthermore, if the compact set $ E $ is regular, we define $E_{s}:=\{z\in\mathbb{C}:g_{\Omega}(z)\leq s\}  $ and $ \Omega_{s}:=\{z\in\mathbb{C}: g_{\Omega}(z)>s\} $. It follows that $ (E_{s})_{s>0} $ also form a neighborhood base of $ \mathrm{Pc}(E) $ (see \cite{Klimek}). 
		
		Now, we prove the main result of this section:						

\begin{proof}[Proof of Theorem \ref{approximation by filled julia sets}]
First, we will show that $\{ K_{n}\}_n$ is sequentially pre-compact in $(\mathscr{R},\Gamma)$. Indeed, passing to a subsequence $\{K_{n_j}\}_{j}$ we may assume that the counting measures of zeros of $ p_{n_j} $'s are weak*-convergent. Then by Lemma \ref{boundedness of Kn} the collection $\{K_{n_j}\}_j$ is uniformly bounded. Next, we show that $\{K_{n_j}\}_j$ has the equicontinuity property. Indeed, for $s>0$ by assumption  all zeros of $p_n$ are contained in $E_{\frac{s}{2}}$ for sufficiently large $n$. Then it follows from Lemma \ref{Pn limiti ve potetial}  							
							\begin{equation}
								|\frac{1}{n_j}\log |p_{n_j}(z)|-g_{\Omega}(z)|<\dfrac{s}{2}  \ \text{for} \  z \in\partial\Omega_{s}.	
							\end{equation}							
							for sufficiently large $n_j$. This in turn implies that
							\begin{equation}
								|p_{n_j}(z)|>e^{\frac{sn_j}{2}} \ \text{for} \  z \in\partial\Omega_{s}.	
							\end{equation}
Since all zeros of $p_{n_j}$ are contained in $\Bbb{C}\setminus\Omega_s$ by applying the minimum modulus principle on the domain $\Omega_s$ we deduce that $|p_{n_j}(z)|>e^{\frac{sn_j}{2}}$ for all
							$z\in \Omega_s$ and sufficiently large $n_j$. Next, by (\ref{greens functions, equation 1}) there exists $M>0$ such that
							\begin{equation}
							|g_{n_j}(z)-\frac1n\log|p_{n_j}(z)||\leq \frac{M}{n_j}\ \text{for}\ z\in \Omega_s
							\end{equation}
which implies that $|g_{n_j}(z)|>\frac{s}{4}$ for $z\in \Omega_s$ and sufficiently large $n_j$. This in turn yields

							\begin{equation}\label{include} K_{n_j}\subset \Bbb{C}\setminus\Omega_s=E_{s} \end{equation} for sufficiently large $n_j$. Since $(E_s)_{s>0}$ form a neighborhood bases for $\mathrm{Pc}(E)$ in $\Bbb{C}$ for each $\epsilon>0$ we can find $0<s<\epsilon$ such that 
							$E_{s}\subset \mathrm{Pc}(E)^{\epsilon}$. 
							On the other hand, by Lemma \ref{greens functions}, we have 
							\begin{equation}\label{ineq2}
								g_{n_j}(z)\leq g_{\Omega}(z)+s
							\end{equation}
							for all $ z\in \mathrm{Pc}(E)^{\epsilon}$ and sufficiently large $n_j$. 
							Hence from (\ref{include}) and (\ref{ineq2}) we deduce that 
							$$\lim_{\epsilon\to 0^+}[\sup_{n_{j}}\omega_{K_{n_j}}(\epsilon)] =0. $$
							Thus, by Proposition \ref{convergence} the family $\{ K_{n}\}_{n\in\Bbb{N}}$ is sequentially pre-compact in $(\mathscr{R},\Gamma)$. Moreover, $\Gamma(K_{n_j},\mathrm{Pc}(E))\to 0$. Indeed, for each $s>0$ by (\ref{include})
							\begin{equation}\label{ineq1}
								\|g_{\Omega}\|_{K_{n_j}}\leq s
							\end{equation} for sufficiently large $n_j$. Moreover, since $E$ is regular $ g_{\Omega}(z)=0 $ on $ \mathrm{Pc}(E)$ and by (\ref{ineq2}) we conclude that $ \parallel g_{n_j}\parallel_{\mathrm{Pc}(E)}\leq s $. Hence, combining (\ref{ineq1}) and (\ref{ineq2}) we deduce that
							 $$ \Gamma(K_{n_j},\mathrm{Pc}(E))=\max(\parallel g_{n_j}\parallel_{\mathrm{Pc}(E)},\parallel g_{\Omega}\parallel_{K_{n_j}})\leq s.$$ 
							 Since $s>0$ arbitrary we conclude that $\Gamma(K_{n_j},\mathrm{Pc}(E))\to 0$ as $j\to\infty$. 
							
							For the general case, since the zeros of $ \{p_{n}\}_{n} $ are bounded for each Klimek convergent subsequence of $ \{K_{n}\}_{n} $ it has a further subsequence $\{K_{n_j}\}$ such that the counting measures of zeros of corresponding $p_n $'s are weak*-convergent. Then by above argument $\Gamma(K_{n_j},\mathrm{Pc}(E))\to0$. Since this holds for all convergent subsequences we conclude that $ \{K_{n}\}_{n\in\Bbb{N}} $ converges to $ \mathrm{Pc}(E) $ in the Klimek distance.	
						\end{proof}
						As a corollary we obtain the following:
			\begin{cor}
			The collection of all filled Julia sets of asymptotically minimal polynomials associated with regular planar compact sets is a proper dense subset of $(\mathscr{R},\Gamma)$.
			\end{cor}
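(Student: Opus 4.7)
My plan is to decompose the corollary into three claims about the collection $\mathcal{F}$ of filled Julia sets arising from asymptotically minimal sequences on regular planar compacta: the inclusion $\mathcal{F}\subseteq\mathscr{R}$, density, and properness. The inclusion is immediate: for every polynomial $p$ of degree $\geq 2$ the filled Julia set $K_p$ is compact, polynomially convex (as the complement of the unbounded connected open set $\Omega_p$), and regular because the dynamical Green's function $g_p$ is continuous on $\mathbb{C}$ and vanishes exactly on $K_p$. Hence each element of $\mathcal{F}$ lies in $\mathscr{R}$.

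For density, I would fix an arbitrary $E\in\mathscr{R}$ and exhibit an explicit asymptotically minimal sequence on $E$ whose filled Julia sets Klimek-converge to $E$. The natural candidate is the sequence of normalized Fekete polynomials $p_n$ of $E$: since $E\in\mathscr{R}$ is regular and therefore non-polar, Fekete points of order $n$ exist and lie in $E=\mathrm{Pc}(E)$, so the zeros of each $p_n$ are trivially contained in every $\varepsilon$-dilation of $\mathrm{Pc}(E)$. As recorded in the examples following Theorem~\ref{mainteorem1}, these $p_n$ form an asymptotically minimal sequence on $E$. Theorem~\ref{approximation by filled julia sets} then gives $\Gamma(K_{p_n},E)\to 0$, placing $E$ in the $\Gamma$-closure of $\mathcal{F}$.

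For properness, I would produce an element of $\mathscr{R}$ that is not the filled Julia set of any polynomial of degree $\geq 2$. The classical Fatou--Julia dichotomy asserts that $K_p$ is either connected or has uncountably many connected components. Consequently, taking $E=\overline{D(0,1)}\cup\overline{D(3,1)}$ produces a set that is compact, polynomially convex, regular (each disk is regular), and has exactly two connected components, hence cannot coincide with any $K_p$. Thus $E\in\mathscr{R}\setminus\mathcal{F}$ and the inclusion is strict.

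The routine part is $\mathcal{F}\subseteq\mathscr{R}$; density is an immediate application of Theorem~\ref{approximation by filled julia sets} once one selects Fekete (or, alternatively, Chebyshev) polynomials as the approximating sequence. The only step requiring external input is properness, where I expect the main obstacle to be choosing a clean witness: invoking the topological dichotomy for filled Julia sets turns this into a one-line observation, since then any disconnected $E\in\mathscr{R}$ with finitely many connected components serves as a witness.
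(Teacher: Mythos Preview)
Your proof is correct. The inclusion $\mathcal{F}\subseteq\mathscr{R}$ and the density argument match the paper's approach exactly (the paper simply says ``density follows from Theorem~\ref{approximation by filled julia sets}'' without naming a specific sequence, but your choice of Fekete polynomials is the natural way to make that line precise, since their zeros lie in $E=\mathrm{Pc}(E)$ by construction).

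Where you genuinely diverge is in the properness argument. The paper invokes the fact that the Green's function of a filled Julia set is H\"older continuous (citing \cite{CG}), so any $E\in\mathscr{R}$ whose Green's function fails to be H\"older serves as a witness; however, no explicit such $E$ is displayed. You instead use the topological dichotomy that $K_p$ is either connected or has uncountably many components, and exhibit the concrete witness $\overline{D(0,1)}\cup\overline{D(3,1)}$. Your route is more self-contained and gives an explicit example at no extra cost; the paper's route has the advantage of pointing to a quantitative obstruction (regularity class of $g_\Omega$) rather than a purely topological one, which connects the corollary to finer structure in $(\mathscr{R},\Gamma)$. Either argument suffices.
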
		
			\begin{proof}
			Note that the filled Julia set of a polynomial of degree $d\geq 2$ has H\"{o}lder property \cite{CG}. Hence, the collection of filled Julia sets is a proper subset of $\mathscr{R}$. The density follows from Theorem \ref{approximation by filled julia sets}.
			\end{proof}
						The next example illustrates that for a sequence of asymptotically minimal polynomials $ \{p_{n}\}_{n} $ associated with a regular compact set $E\subset \Bbb{C}$ as in Theorem \ref{approximation by filled julia sets}, their filled Julia sets need not to converge in the Hausdorff topology.  It was stated as an open problem in \cite{filled julia set of cheb. polyn} that for the sequence of dual Chebyshev polynomials and a limit $K_{\infty}$ set of $\{K_n\}_n$ in $(\mathscr{K}, \chi)$ whether the difference $E\setminus K_{\infty}$ is a polar set. For asymptotically minimal polynomials this difference could be quite large:	
						
						\begin{example}\label{dynamicalexample}
							For fixed $c\in \Bbb{C}$, we let $p_n(z)=z^{n}+c $ for $ n\in\Bbb{N} $. Then it is easy to see that $ \{p_{n}\}_{n\in\Bbb{N}} $ satisfy hypotheses of Theorem \ref{approximation by filled julia sets} on the unit circle $ E=S^1$. Then, the filled Julia sets of $p_n $'s converge to $ S^1 $ in the Klimek distance. On the other hand, by \cite[Theorem 1.2]{pn+c example} if $ |c|<1 $ then the filled Julia sets $K_{p_n}$ converges to the closed unit disc $\overline{\mathbb{D}}$; however if $ |c|>1 $ then the filled Julia sets converges to $S^1$ with respect to Hausdorff topology. Finally, for almost every $c\in S^1$ the filled Julia sets do not converge to any compact set \cite{KRS}. Moreover, again by \cite[Theorem 1.2]{pn+c example} for any limit set $K_{\infty}$ of the filled Julia sets $K_{p_n}$ in the Hausdorff topology we have $\mathrm{Pc}(K_{\infty})=\overline{\mathbb{D}}$
						\end{example}
						
Recall that in section \S \ref{mainproof}, we showed that if the counting measures of zeros of $ \{p_{n}\}_{n} $ are weak*-convergent, then the filled Julia sets $ K_{n} $'s are uniformly bounded. This yields us $ \{K_{n}\}_{n} $ is sequentially pre-compact in  $ \mathscr{K} $ with respect to Hausdorff topology. Motivated by Example \ref{dynamicalexample} we prove following result:							
						\begin{prop}\label{limitsets}
							Let  $ E $ be a regular compact set in $ \mathbb{C} $ and $ \{p_{n}\}_{n\in\Bbb{N}} $ be an asymptotically minimal sequence. Assume that for all $ \varepsilon>0 $ there exists $ N\in\mathbb{N} $ such that the zeros of $p_n $'s are contained in $ \mathrm{Pc}(E)^{\varepsilon} $ for all $ n\geq N $.  Let $ K_{\infty}\subset \Bbb{C}$ be a compact set that is a limit  point of $\{K_n\}_{n\in\Bbb{N}}$ with respect to Hausdorff metric then $ \mathrm{Pc}(K_{\infty})=\mathrm{Pc}(E)$.
						\end{prop}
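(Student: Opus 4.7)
The plan is to sandwich $\mathrm{Pc}(K_\infty)$ between $\mathrm{Pc}(E)$ and itself by combining the two main results already established. Since the hypothesis on the zeros of $p_n$ is exactly that of Theorem~\ref{approximation by filled julia sets}, we have $\Gamma(K_n,\mathrm{Pc}(E))\to 0$; moreover, taking $\varepsilon_0=1$ in that hypothesis shows the zeros eventually lie in the bounded set $\mathrm{Pc}(E)^{1}$, so the zeros are uniformly bounded and Theorem~\ref{mainteorem1} applies, giving $\omega_n\to\omega_E$ weak*. Fix a subsequence $K_{n_j}\to K_\infty$ in the Hausdorff metric; I will use the Klimek convergence for one inclusion and the measure convergence for the other.

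For $\mathrm{Pc}(K_\infty)\subset\mathrm{Pc}(E)$: since $E$ is regular, $g_\Omega$ is continuous on $\mathbb{C}$, so Hausdorff convergence together with uniform continuity of $g_\Omega$ on bounded sets gives $\|g_\Omega\|_{K_\infty}=\lim_{j}\|g_\Omega\|_{K_{n_j}}\le\lim_{j}\Gamma(K_{n_j},\mathrm{Pc}(E))=0$. Thus $g_\Omega\equiv 0$ on $K_\infty$, and regularity of $E$ identifies $\{g_\Omega=0\}=\mathrm{Pc}(E)$, whence $K_\infty\subset\mathrm{Pc}(E)$ and $\mathrm{Pc}(K_\infty)\subset\mathrm{Pc}(\mathrm{Pc}(E))=\mathrm{Pc}(E)$.

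For the reverse inclusion $\mathrm{Pc}(E)\subset\mathrm{Pc}(K_\infty)$ I aim to show $\partial\mathrm{Pc}(E)\subset K_\infty$; combined with the identity $\mathrm{Pc}(F)=\mathrm{Pc}(\partial_e F)$ for compact $F\subset\mathbb{C}$, this yields $\mathrm{Pc}(E)=\mathrm{Pc}(\partial\mathrm{Pc}(E))\subset\mathrm{Pc}(K_\infty)$. Fix $z_0\in\partial\mathrm{Pc}(E)$. Since $E$ is regular, $\mathrm{supp}(\omega_E)=\partial\mathrm{Pc}(E)$, so every open neighborhood $U$ of $z_0$ satisfies $\omega_E(U)>0$. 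The Portmanteau theorem and $\omega_{n_j}\to\omega_E$ then give $\liminf_{j}\omega_{n_j}(U)\ge\omega_E(U)>0$, so $U\cap J_{n_j}\ne\emptyset$ for all but finitely many $j$. Hence $z_0\in\liminf_{j}J_{n_j}\subset\liminf_{j}K_{n_j}=K_\infty$, the last equality being Hausdorff convergence. Combining the two inclusions gives $\mathrm{Pc}(K_\infty)=\mathrm{Pc}(E)$.

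The only non-mechanical point is the identification $\mathrm{supp}(\omega_E)=\partial\mathrm{Pc}(E)$ for regular $E$, which comes from the fact that regularity makes every point of $\partial\mathrm{Pc}(E)$ a regular boundary point of $\Omega$, ruling out gaps in the support of the equilibrium measure; this is standard potential theory and can be cited from \cite{ransfordbook}.
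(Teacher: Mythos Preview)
Your argument is correct. The upper bound $\mathrm{Pc}(K_\infty)\subset\mathrm{Pc}(E)$ is handled essentially as in the paper: the paper extracts from the proof of Theorem~\ref{approximation by filled julia sets} the explicit inclusion $K_{n_j}\subset E_s$ for small $s>0$ (equation~(\ref{include})) and then lets $s\to 0$, whereas you read off the equivalent information $\|g_\Omega\|_{K_{n_j}}\to 0$ directly from the Klimek convergence in the statement of Theorem~\ref{approximation by filled julia sets} and pass it through the continuity of $g_\Omega$---the same content in slightly different packaging. The lower bound, however, is obtained by a genuinely different route. The paper invokes Lemma~\ref{E subset of liminf limsup } (which in turn rests on the Green's function estimate of Lemma~\ref{greens functions} and the argument of \cite[Proposition~4.3]{filled julia set of cheb. polyn}) to get $E\subset\mathrm{Pc}(K_\infty)$. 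You instead appeal to Theorem~\ref{mainteorem1} and use the weak* convergence $\omega_{n_j}\to\omega_E$ together with the Portmanteau inequality to force $\partial\mathrm{Pc}(E)=\mathrm{supp}(\omega_E)\subset\liminf_j J_{n_j}\subset K_\infty$. This is a clean measure-theoretic alternative that avoids Lemma~\ref{E subset of liminf limsup } entirely and uses the two main theorems of the paper as black boxes; the trade-off is that it relies on the identification $\mathrm{supp}(\omega_E)=\partial\mathrm{Pc}(E)$ for regular $E$, which you correctly flag as the one nontrivial external input. A minor quibble: the zeros are only \emph{eventually} in $\mathrm{Pc}(E)^1$, but since the finitely many exceptional $p_n$ each have finitely many zeros, uniform boundedness of all zeros still holds and Theorem~\ref{mainteorem1} applies as you claim.
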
						
				First, adapting the argument in \cite[Proposition 4.3]{filled julia set of cheb. polyn} we can prove the following:
						
						\begin{lem}\label{E subset of liminf limsup }
							For any limit point $ K_{\infty} $ of a convergent subsequence $ \{K_{n_{k}}\}_{k} $ with respect to Hausdorff topology we have that \begin{equation}
								E\subset \mathrm{Pc}(K_{\infty})\subset \mathrm{Pc}(\limsup_{n\rightarrow\infty}K_{n}).
							\end{equation}
							
						\end{lem}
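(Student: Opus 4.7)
My plan is to treat the two inclusions separately, with the second one being essentially formal and the first one carrying the content.

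The inclusion $\mathrm{Pc}(K_\infty) \subset \mathrm{Pc}(\limsup_{n\to\infty} K_n)$ is immediate. Hausdorff convergence of the chosen subsequence $K_{n_k} \to K_\infty$ means every point of $K_\infty$ is a limit of a selection $z_{n_k} \in K_{n_k}$, so $K_\infty \subset \limsup_{n\to\infty} K_n$, and the claim follows from the obvious monotonicity of the polynomial convex hull: $A \subset B$ implies $\mathrm{Pc}(A) \subset \mathrm{Pc}(B)$, since the defining inequalities $|p(z)| \leq \|p\|_B$ are weaker than $|p(z)| \leq \|p\|_A$.

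For the main inclusion $E \subset \mathrm{Pc}(K_\infty)$ I would argue by contradiction. Suppose some $z_0 \in E$ satisfies $z_0 \notin \mathrm{Pc}(K_\infty)$. By definition of the polynomial convex hull, there is a polynomial $q$ of some degree $d$ with $|q(z_0)| > \|q\|_{K_\infty}$. Each $K_{n_k}$ is a regular compact set of positive capacity---its capacity being computed from the leading coefficient of $p_{n_k}$ via (\ref{Juliacap})---so the Bernstein--Walsh inequality yields
\begin{equation*}
|q(z_0)| \leq \|q\|_{K_{n_k}} \exp\bigl(d \cdot g_{n_k}(z_0)\bigr).
\end{equation*}
I would then pass to the limit as $k\to\infty$. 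On the sup-norm side, Hausdorff convergence $K_{n_k} \to K_\infty$ together with uniform continuity of $|q|$ on a fixed compact neighborhood of $K_\infty \cup \bigcup_k K_{n_k}$ gives $\|q\|_{K_{n_k}} \to \|q\|_{K_\infty}$. On the Green's function side, the hypotheses of Proposition \ref{limitsets} are exactly those of Theorem \ref{approximation by filled julia sets}, so $\Gamma(K_n, \mathrm{Pc}(E)) \to 0$; in particular $g_{n_k}(z_0) \to g_\Omega(z_0)$, and since $E$ is regular with $z_0 \in E \subset \mathrm{Pc}(E)$, this limit equals $0$. Taking the limit in the displayed inequality gives $|q(z_0)| \leq \|q\|_{K_\infty}$, contradicting the choice of $q$.

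The proof is short once the two convergence inputs are in place. The only real obstacle is recognizing that Theorem \ref{approximation by filled julia sets} already delivers the uniform Green's function control on all of $\mathbb{C}$, so no separate Klimek-precompactness argument or further subsequence extraction is needed inside this lemma; both sides of the Bernstein--Walsh inequality can be passed to their limits simultaneously, and the regularity of $E$ at $z_0$ closes the contradiction.
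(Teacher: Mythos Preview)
Your argument is correct, and the two inclusions are handled cleanly. The Bernstein--Walsh inequality together with Hausdorff convergence of $K_{n_k}$ and the Green's function control indeed forces $|q(z_0)|\leq\|q\|_{K_\infty}$, contradicting the choice of $q$.

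Your route differs from the paper's in one noteworthy respect. The paper does not invoke Theorem~\ref{approximation by filled julia sets}; instead it passes to a further subsequence along which the zero-counting measures $\mu_{n_k}$ converge weak*, which is precisely what is needed to apply Lemma~\ref{greens functions} and obtain the one-sided estimate $\limsup_{k\to\infty} g_{n_k}(z_0)\leq g_\Omega(z_0)$. With that inequality in hand, the argument from \cite[Proposition~4.3]{filled julia set of cheb. polyn} goes through verbatim. Your approach, by contrast, leverages the full Klimek convergence $\Gamma(K_n,\mathrm{Pc}(E))\to 0$ already established in Theorem~\ref{approximation by filled julia sets}, which gives \emph{uniform} convergence $g_n\to g_\Omega$ on all of $\mathbb{C}$ and therefore makes the subsequence extraction unnecessary. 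This is logically legitimate since Theorem~\ref{approximation by filled julia sets} is proved earlier in the paper and its hypotheses coincide with those inherited by the lemma from Proposition~\ref{limitsets}. The trade-off: your argument is shorter and self-contained once Theorem~\ref{approximation by filled julia sets} is in place, whereas the paper's argument is more economical in its inputs (only the one-sided $\limsup$ estimate of Lemma~\ref{greens functions} is needed, not the full two-sided Klimek convergence) and in particular would survive in settings where regularity of $E$ is weakened and Theorem~\ref{approximation by filled julia sets} might not apply.
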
	
						
						\begin{proof}
							Passing to a subsequence if necessary we may assume that the counting measures of zeros of $ \{p_{n}\}_{n} $ are weak*-convergent. The rest of the proof follows from \cite[Proposition 4.3 ]{filled julia set of cheb. polyn} and Lemma \ref{greens functions}. 
						\end{proof}	
				Now, we prove Proposition \ref{limitsets}:			
						\begin{proof}[Proof of Proposition \ref{limitsets}]
							Note that $ E_{s} $ is polynomially convex for all $ s>0 $ \cite{extremal plrsubhrmnic fnction}. Passing to a subsequence if necessary we may assume that the counting measures of zeros of $ \{p_{n}\}_{n} $ is weak*-convergent. Then, by Lemma \ref{E subset of liminf limsup } and the proof of Theorem \ref{approximation by filled julia sets}, for $ s>0 $ small
							we have 
							\begin{equation}
								E\subset \mathrm{Pc}(K_{\infty})\subset \mathrm{Pc}(\limsup_{n\rightarrow\infty}K_{n})\subset E_{s}.
							\end{equation}
							Letting $s\to0$ we deduce that
							$$\mathrm{Pc}(K_{\infty})= \mathrm{Pc}(E).$$
						\end{proof}						
						Finally, we focus on the Hausdorff limit of the Julia sets of asymptotically minimal polynomials.  Let $ E $ be a compact non-polar subset of $ \mathbb{C}$ and $\Omega$ be the unbounded component of $\Bbb{C}\setminus E$. We denote the outer boundary of $E$ by $ J_E:=\partial \Omega$. We also denote the exceptional set (see \cite{ransfordbook} for definition) for the Green's function $ g_{\Omega} $ by $F_E$. This means that $ F_E=\{z\in E: g_{\Omega}(z)>0\} $. We adapt the argument in \cite[Theorem 1.3(ii)]{juliasetortpoly} to our setting to prove that the limit of Julia sets of asymptotically minimal polynomials contain the regular points of the outer boundary:

\begin{thm}
Let $ E $ be a compact non-polar subset of $ \mathbb{C} $ and $ \{p_{n}\}_{n\in\Bbb{N}} $ be a sequence of asymptotically minimal polynomials whose zeros are contained in $ \mathrm{Pc}(E) $. Then,
							\begin{equation}
								\overline{J\setminus F}\subseteq\liminf_{n\rightarrow\infty}J_{n}.
							\end{equation}
							
In particular, if $ J $ is regular, then \begin{equation}
								J\subseteq\liminf_{n\rightarrow\infty} J_{n}.
							\end{equation} 
\end{thm}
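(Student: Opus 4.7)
My plan is to combine Theorem \ref{mainteorem1} with a potential-theoretic support argument. Since the zeros of $p_n$ are assumed to lie in the compact set $\mathrm{Pc}(E)$, they are uniformly bounded, so Theorem \ref{mainteorem1} applies and the Brolin measures satisfy $\omega_n\to\omega_E$ in the weak* topology. The Brolin measure $\omega_n$ is supported on $J_n$, and this is the only bridge between the dynamical objects and the potential-theoretic ones we will use.

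The core step is to show that every point of $J\setminus F$ lies in $\mathrm{supp}(\omega_E)$. Fix $z_0\in J\setminus F$ and suppose towards a contradiction that there exists $r>0$ with $D(z_0,r)\cap \mathrm{supp}(\omega_E)=\emptyset$. Then $U_{\omega_E}$ is harmonic on $D(z_0,r)$ (as an integral of harmonic functions over a set not meeting the disk), and hence so is $g_{\Omega}=U_{\omega_E}-\log\mathrm{cap}(E)$. Since $z_0\in J\setminus F$ we have $g_{\Omega}(z_0)=0$, and because $g_{\Omega}\ge 0$ the minimum principle forces $g_{\Omega}\equiv 0$ on $D(z_0,r)$. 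But $z_0\in\partial\Omega$ implies $D(z_0,r)\cap\Omega\neq\emptyset$, while $g_{\Omega}>0$ on $\Omega$ by definition of the Green's function with pole at $\infty$; this contradiction gives $z_0\in \mathrm{supp}(\omega_E)$.

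With this inclusion in hand, the rest follows quickly. For any $z_0\in J\setminus F$ and any $\varepsilon>0$, the open set $D(z_0,\varepsilon)$ has $\omega_E(D(z_0,\varepsilon))>0$. The Portmanteau characterization of weak* convergence on open sets gives
\begin{equation*}
\liminf_{n\to\infty}\omega_n(D(z_0,\varepsilon))\ \geq\ \omega_E(D(z_0,\varepsilon))\ >\ 0,
\end{equation*}
so $\omega_n(D(z_0,\varepsilon))>0$ for all sufficiently large $n$, and since $\mathrm{supp}(\omega_n)\subset J_n$ this forces $J_n\cap D(z_0,\varepsilon)\neq\emptyset$ eventually. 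Choosing $z_n\in J_n$ of minimal distance to $z_0$ for each large $n$ yields $z_n\to z_0$, so $z_0\in\liminf_{n\to\infty}J_n$. A standard diagonal argument shows that $\liminf_{n\to\infty}J_n$ is closed, so $\overline{J\setminus F}\subseteq \liminf_{n\to\infty}J_n$. For the ``in particular'' statement, if $J$ is regular then $g_{\Omega}\equiv 0$ on $J$, hence $F\cap J=\emptyset$ and $J\setminus F=J$.

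The main (indeed the only nontrivial) obstacle is the support inclusion $J\setminus F\subseteq\mathrm{supp}(\omega_E)$; everything else is a direct consequence of Theorem \ref{mainteorem1} together with routine weak* convergence arguments.
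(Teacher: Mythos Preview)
Your argument is correct, and it takes a genuinely different route from the paper's own proof.

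The paper proceeds by contradiction in the style of \cite[Theorem 1.3(ii)]{juliasetortpoly}: assuming $z_0\in J\setminus F$ misses $\liminf J_n$, it extracts a subsequence with $D(z_0,\delta)\cap J_{n_k}=\emptyset$, passes to a further subsequence along which the zero-counting measures converge, and then uses Lemma~\ref{Pn limiti ve potetial} together with the uniform estimate (\ref{greens functions, equation 1}) to compare $g_{n_k}$ with $g_\Omega$. A capacity argument and Harnack's inequality then produce a uniform lower bound $g_{n_k}(z_0)\geq \varepsilon/3$, which contradicts Lemma~\ref{greens functions}. In short, the paper re-uses the Green's-function machinery (Lemmas~\ref{Pn limiti ve potetial} and~\ref{greens functions}) directly rather than invoking Theorem~\ref{mainteorem1}.

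Your approach instead bootstraps from Theorem~\ref{mainteorem1}: since the zeros sit in the compact set $\mathrm{Pc}(E)$, that theorem gives $\omega_n\to\omega_E$ in the weak* topology, and then the elementary inclusion $J\setminus F\subseteq\mathrm{supp}(\omega_E)$ (your minimum-principle argument) combined with lower semicontinuity of measures on open sets does the rest. This is more conceptual and shorter, and it highlights that the present theorem is really a corollary of Theorem~\ref{mainteorem1}; the price is that it relies on the full strength of that theorem, whereas the paper's argument only needs the two auxiliary lemmas. Both approaches are valid; yours makes the logical dependence on the Brolin-measure convergence explicit.
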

\begin{proof}[Sketch of Proof] Since we mainly follow the argument in the proof of \cite[Theorem 1.3(ii)]{juliasetortpoly} we only give the main differences that require clarification.
Assume that there exists $ z_{0}\in J\setminus F $ such that $ z_0\notin \displaystyle\liminf_{n\rightarrow\infty}J_{n} $. Then $ g_{\Omega}(z_{0})=0 $ and there exists $ \delta>0 $ and $ (n_{k})_{k} $ with $ n_{k}\nearrow \infty $ such that for all $ k $, $ D(z_{0},\delta)\cap J_{n_{k}}=\emptyset$. By passing to a further subsequence if necessary we may assume that the counting measures of zeros of $ p_{n_{k}} $'s are weak*-convergent. Then using Lemma \ref{Pn limiti ve potetial} and inequality (\ref{greens functions, equation 1}), we can observe that for every compact set $ V\subseteq\Omega $ and every $ \varepsilon>0 $, we have \begin{equation}\label{last theorem,limif Jn,equation 1}
								\lim_{n\rightarrow\infty}\mathrm{cap}(\{z\in V:g_{\Omega_{n_{k}}}(z)<g_{\Omega}(z)-\varepsilon\})=0.
							\end{equation}
							Then following the argument in the proof of \cite[Theorem 1.3(ii)]{juliasetortpoly} one can show that there exist $ \varepsilon>0 $ and $ k\in\mathbb{N} $ such that for all $ k\geq N $, there exists $ z_{k}\in  D(z_{0},\delta) $ with $ g_{n_{k}}(z_{k})\geq\epsilon $ and $ D(z_{0},\delta)\subset\Omega_{n_{k}} $.  By Harnack's inequality, we obtain \begin{equation}
								g_{n_{k}}(z_{0})\geq \frac13g_{n_{k}}(z_{k})\geq\dfrac{\epsilon}{3}>0.
							\end{equation}
							On the other hand, by Lemma \ref{greens functions}, we have that \begin{equation}
								\limsup_{k\rightarrow\infty}g_{n_{k}}(z_{0})\leq g_{\Omega}(z_{0})=0
							\end{equation}
							which is a contradiction. Hence, we deduce that \begin{equation}
								\overline{J\setminus F}\subseteq\liminf_{n\rightarrow\infty}J_{n}.
							\end{equation}
							
\end{proof}
						
		\section*{Acknowledgement}
We are grateful to the anonymous referees for their comments which improve the presentation of this paper.


\begin{thebibliography}{XX}
							
							\bibitem{BKS}Bialas-Ciez, L., Kosek, M.,  Stawiska, M. On Lagrange polynomials and the rate of approximation of planar sets by polynomial Julia sets. J. Math. Anal. Appl., 464(1), 507-530 (2018).
										
							
							\bibitem {BP} Bishop, C. J., Pilgrim, K. Dynamical dessins are dense. Rev. Mat. Iberoam., 31(3), 1033-1040 (2015).
							
							\bibitem{pn+c example}Boyd, S. H.,  Schulz, M. J. Geometric limits of Mandelbrot and Julia sets under degree growth. International Journal of Bifurcation and Chaos, 22(12), 1250301 (2012).

							
							\bibitem{brolin} Brolin, H. Invariant sets under iteration of rational functions. Ark. Mat., 6(2), 103-144 (1965).
							
							\bibitem{CG} Carleson, L. and Gamelin, T. W., Complex dynamics, Springer-Verlag, New York 1993. 
							
							\bibitem{juliasetortpoly} Christiansen, J. S., Henriksen, C., Pedersen, H. L.,  Petersen, C. L. Julia sets of orthogonal polynomials. Potential Anal., 50(3), 401-413 (2019).
							
							\bibitem{filled julia set of cheb. polyn} Christiansen, J. S., Henriksen, C., Pedersen, H. L., Petersen, C. L. Filled Julia sets of Chebyshev polynomials. J. Geom. Anal., 1-14 (2021).
							
						\bibitem{duncan} Dauvergne, D. (2019). A necessary and sufficient condition for global convergence of the zeros of random polynomials. Adv. Math., (384) Paper No. 107691, 33 (2021)
										
							\bibitem{KRS} Kaschner, S. R., Romero, R., Simmons, D. Geometric Limits of Julia Sets of Maps $z^n+\exp(2\pi i\theta)$ as $n\to\infty$. International Journal of Bifurcation and Chaos, 25(08), 1530021 (2015).
							
							\bibitem{Klimek} Klimek, M. Metrics associated with extremal plurisubharmonic functions. Proc. Amer. Math. Soc., 123(9), 2763-2770 (1995).
							
							\bibitem{zeros of faber poly} Kövari, T.,  Pommerenke, C. On Faber polynomials and Faber expansions. Math. Z., 99(3), 193-206 (1967).
							
							\bibitem{defin faber poly} Levenberg, N.,  Wielonsky, F.  Zeros of Faber polynomials for Joukowski airfoils. Constr. Approx., 52(1), 93-114 (2020).
							
							\bibitem{L} Lindsey, K. A.  Shapes of polynomial Julia sets. Ergodic Theory Dynam. Systems, 35(6), 1913-1924 (2015).
							
							\bibitem{LY} Lindsey, K.,  Younsi, M.  Fekete polynomials and shapes of Julia sets. Trans. Amer. Math. Soc., 371(12), 8489-8511 (2019)..
							
							\bibitem{lyubich} Lyubich, M. Y. The maximum-entropy measure of a rational endomorphism of the Riemann sphere. Functional Analysis and Its Applications, 16(4), 309-311 (1982).
							
							\bibitem{weaklmtof maximal entropy} Petersen, C. L.,  Uhre, E. Weak limits of the measures of maximal entropy for Orthogonal polynomials. Potential Anal., 54(2), 219-225 (2021).
							
							\bibitem{ransfordbook} Ransford, T.  Potential theory in the complex plane (No. 28). Cambridge university press (1995).
							
							\bibitem{SaTo} Saff, E. B.,  Totik, V.  Logarithmic potentials with external fields (Vol. 316). Springer Science  Business Media (2013).
							
							\bibitem{Sibony}Sibony, N.  Dynamique des applications rationnelles de $P^k$. Panoramas et syntheses, 8, 97-185 (1999).
							
							\bibitem{extremal plrsubhrmnic fnction} Siciak, J. Extremal plurisubharmonic functions in $ C^ N$. Ann. Polon. Math., 39(1), 175-211 (1981).
							
							\bibitem{Siciak2} Siciak, J.  On metrics associated with extremal plurisubharmonic functions. Bull. Polish Acad. Sci. Math., no. 2, 151-161 (1997).
							
							\bibitem{general orth. polyn.} Stahl, H., Steel, J., Totik, V. General orthogonal polynomials (No. 43). Cambridge University Press (1992). 							
						\end{thebibliography}
					\end{document}